\newtheorem{theorem}{Theorem}
\newtheorem{corollary}[theorem]{Corollary}
\newtheorem{lemma}[theorem]{Lemma}
\theoremstyle{definition}
\newtheorem{example}[theorem]{Example}
\newtheorem{remark}[theorem]{Remark}
\newcommand{\II}{{\mathbb I}}
\newcommand{\RR}{\mathbb{R}}
\newcommand{\BB}{\mathcal{B}}
\newcommand{\KK}{\mathcal{K}}
\newcommand{\LL}{\mathcal{L}}
\newcommand{\muhat}{\widehat{\mu}}
\newcommand{\muover}{\overline{\mu}}
\title{Bivariate measure-inducing quasi-copulas}
\author[N. Stopar]{Nik Stopar \orcidlink{0000-0002-0004-4957} } 
\address{University of Ljubljana, Faculty of Civil and Geodetic Engineering, and Institute of Mathematics, Physics and Mechanics, Ljubljana, Slovenia}
\email{nik.stopar@fgg.uni-lj.si}
\keywords{Quasi-copula; copula; signed measure; total variation norm; infinite series}
\subjclass[2020]{62H05, 60B10, 60A10}
\begin{document}

\begin{abstract}
It is well known that every bivariate copula induces a positive measure on the Borel $\sigma$-algebra on $[0,1]^2$, but there exist bivariate quasi-copulas that do not induce a signed measure on the same $\sigma$-algebra. In this paper we show that a signed measure induced by a bivariate quasi-copula can always be expressed as an infinite combination of measures induced by copulas. With this we are able to give the first characterization of measure-inducing quasi-copulas in the bivariate setting.
\end{abstract}

\maketitle

\section{Introduction}

Copulas, introduced in 1959 by Sklar, are one of the main tools for modeling dependence of random variables in statistical literature. These are multivariate functions that link the cumulative distribution function of a random vector and its one-dimensional marginal distributions. Copulas have found widespread use in various practical applications such as finance \cite{McNFreEmb15}, biology \cite{OnkGruMunObe09}, environmental sciences \cite{BulYie20,DzuNgaOdo20} and many others.

Quasi-copulas, a generalization of copulas, were introduced by Alsina, Nelsen, and Schweizer \cite{AlsNelSch93} in order to characterize operations on distribution functions that can be derived from operations on random variables. Their importance is explained by the following property: a point-wise infimum respectively supremum of a given set of copulas is always a quasi-copula.
Thus, quasi-copulas are indispensable in the theory of imprecise probabilities, which model situations when the exact dependence structure between random variables, i.e. copula, is not known and is therefore replaced by a set of copulas.

The set of all quasi-copulas has been studied intensively in recent years and compared to the set of all copulas. The lattice theoretical properties of both sets were investigated in \cite{NelUbe05,FerNelUbe11,AriDeB19,OmlSto22-2}, while topological properties, particularly from the perspective of Baire categories, were studied in \cite{DurFerTru16,DurFerTruUbe20}.
From a measure-theoretic point of view one of the major differences between copulas and quasi-copulas is that, while every $n$-variate copula $C$ induces a positive measure $\mu_C$ on the Borel $\sigma$-algebra on $[0,1]^n$ (see \cite{Nel06}), there exist $n$-variate quasi-copulas (for all $n \ge 2$) that do not induce a signed measure on the same $\sigma$-algebra \cite{NelQueRodUbe10,FerRodUbe11}.
This has stimulated numerous investigations of the mass distribution of quasi-copulas \cite{NelQueRodUbe02,DeBDeMUbe07,Ube23,Sem23,DolKuzSto24} and related concepts \cite{DibSamMesKle16,OmlSto20,OmlSto22}, aimed at a better understanding of the behaviour of quasi-copulas.
As evidenced by several very recent papers, this is an active area of research, where there is still much to be done. In fact, a full characterization of quasi-copulas that do induce a signed measure on the Borel $\sigma$-algebra on $[0,1]^n$ is still an open problem, see \cite[Problem~4]{AriMesDeB20}.

In this paper we study bivariate quasi-copulas that induce a signed measure on the Borel $\sigma$-algebra on $[0,1]^2$. 
We show that the signed measure $\mu_Q$ induced by such a quasi-copula $Q$ can always be expressed with measures induced by bivariate copulas.
This is done by making use of a recent result in \cite{DolKuzSto24} giving a characterization of those quasi-copulas that can be expressed as linear combinations of two copulas. Note that any such quasi-copula automatically induces a signed measure, but not all measure-inducing quasi-copulas can be expressed as linear combinations of copulas, see \cite[Example~13]{DolKuzSto24}.
However, the same paper also initiated the study of quasi-copulas using infinite series of copulas and this idea is key to our result.
In particular, our main theorem reads as follows.

\begin{theorem}\label{thm:main}
   For a bivariate quasi-copula $Q$ the following two conditions are equivalent.
    \begin{enumerate}[$(i)$]
        \item There exists a signed measure $\mu_Q$ defined on the Borel $\sigma$-algebra on $[0,1]^2$ such that
        \begin{equation*}
            \mu_Q([0,x] \times [0,y])=Q(x,y) \qquad\text{for all $x,y \in [0,1]$.}
        \end{equation*}
        \item There exists a sequence of bivariate copulas $C_n$ and a sequence of real numbers $\gamma_n$ such that
        \begin{enumerate}[$(a)$]
            \item the series of functions $\sum_{n=1}^\infty \gamma_n C_n$ converges uniformly to $Q$ and
            \item the series of induced measures $\sum_{n=1}^\infty \gamma_n \mu_{C_n}$ converges in the total variation norm to some finite signed measure.
        \end{enumerate}
    \end{enumerate}
\end{theorem}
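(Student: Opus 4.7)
I would start with the easier direction (ii)$\Rightarrow$(i). Assuming (ii), let $\mu$ be the finite signed Borel measure on $[0,1]^2$ to which $\sum_{n=1}^\infty \gamma_n\mu_{C_n}$ converges in the total variation norm. Because evaluation on a fixed Borel set is a continuous linear functional of norm at most $1$ on the space of finite signed measures with the TV norm, for every $x,y\in[0,1]$ one has
\begin{equation*}
\mu([0,x]\times[0,y]) \;=\; \sum_{n=1}^\infty \gamma_n \mu_{C_n}([0,x]\times[0,y]) \;=\; \sum_{n=1}^\infty \gamma_n C_n(x,y) \;=\; Q(x,y),
\end{equation*}
where the final equality uses the uniform (hence pointwise) convergence asserted in (ii)(a). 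Setting $\mu_Q:=\mu$ verifies (i).

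For the hard direction (i)$\Rightarrow$(ii) my plan is a telescoping construction driven by dyadic refinement. For each $n\ge 0$, partition $[0,1]^2$ into the $2^{2n}$ closed squares of side $2^{-n}$ and construct a quasi-copula $Q_n$ whose induced signed measure $\mu_{Q_n}$ assigns the same mass as $\mu_Q$ to each such block (this is the signed analogue of a checkerboard approximation, and one must take care that $Q_n$ is genuinely a quasi-copula). Uniform convergence $Q_n\to Q$ then follows from agreement at all dyadic grid points together with the common $1$-Lipschitz property of quasi-copulas. Since each increment $Q_n-Q_{n-1}$ is built from a signed distribution on finitely many dyadic blocks with controllable amplitude, the characterization of \cite{DolKuzSto24} should apply and yield a decomposition
\begin{equation*}
Q_n - Q_{n-1} \;=\; \alpha_n\,C_n^+ \;-\; \alpha_n\,C_n^-,
\end{equation*}
with $\alpha_n\ge 0$ and copulas $C_n^\pm$; the equal positive and negative weights reflect the fact that $(Q_n-Q_{n-1})(1,1)=0$. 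Rearranging the telescoping sum
\begin{equation*}
Q \;=\; Q_0 \;+\; \sum_{n=1}^\infty (Q_n-Q_{n-1})
\end{equation*}
into a single doubly-indexed series then produces the sought expansion $\sum_k \gamma_k C_k$, with (ii)(a) inherited from the uniform convergence of $Q_n$.

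The technical heart of the argument, and where I expect the main obstacle, is twofold: showing that the $Q_n$ can be built as bona fide quasi-copulas in the first place, and proving $\sum_n\alpha_n<\infty$ so that (ii)(b) holds. A naive signed checkerboard need not be $1$-Lipschitz when the block masses of $\mu_Q$ vary wildly, and if $\mu_Q$ has a singular part, any absolutely continuous $Q_n$ would fail to represent it faithfully at every scale. The fix is to let $Q_n$ be a more flexible object built from the Hahn decomposition of $\mu_Q$ restricted to each block, for instance by placing positive and negative parts on disjoint sub-regions via a shuffle-of-$M$ style construction whose effective Lipschitz constants remain bounded. For the summability, the key estimate should be that $\alpha_n$ is controlled by the additional $\mu_Q$-mass that must be reshuffled when passing from scale $2^{-(n-1)}$ to scale $2^{-n}$; a dyadic-tree bookkeeping of these increments should telescope into a constant multiple of $\|\mu_Q\|_{TV}<\infty$, giving absolute summability of the $\alpha_n$. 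Making both steps coherent with the two-copula criterion of \cite{DolKuzSto24} is where I expect the delicate work to lie.
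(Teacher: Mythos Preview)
Your (ii)$\Rightarrow$(i) is correct and matches the paper.

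For (i)$\Rightarrow$(ii), the telescoping strategy and the appeal to the two-copula criterion of \cite{DolKuzSto24} are indeed the right ingredients, but the dyadic checkerboard construction has a fatal gap at exactly the point you flag and then dismiss too quickly. (Your first worry is not an obstacle: the bilinear extension of $Q$ from the dyadic grid is always a quasi-copula.) The real problem is summability. Take $Q=M(x,y)=\min(x,y)$: the measure $\mu_M$ lives on the diagonal, while every checkerboard $Q_n$ is absolutely continuous, so $\|\mu_M-\mu_{Q_n}\|_{TV}=2$ for all $n$, and a direct computation gives $\|\mu_{Q_n}-\mu_{Q_{n-1}}\|_{TV}=1$ for every $n\ge 1$, hence $\alpha_n\ge\tfrac12$ and $\sum_n\alpha_n=\infty$. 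Your hoped-for dyadic-tree bookkeeping that telescopes into a multiple of $\|\mu_Q\|_{TV}$ therefore fails already for the most basic singular copula: the mass reshuffled between consecutive scales does not decay at all. Replacing uniform spreading by a shuffle-of-$M$ cannot help unless the rule for $Q_n$ is allowed to see more of $\mu_Q$ than its block masses at level $n$, at which point the proposal is no longer concrete. (A smaller issue: the two-copula criterion is stated for quasi-copulas, so it applies to each $Q_n$, not directly to the difference $Q_n-Q_{n-1}$.)

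The paper's route differs on two essential points. First, its approximants $Q_N$ are not checkerboards: one identifies ``bad'' strip sets $K_N,L_N\subseteq\II$ on which $\mu_Q^+$ is over-concentrated relative to strip width, and defines $Q_N$ to coincide with $Q$ on $K_N'\times L_N'$ and to be bilinear across the bad strips. Because $|\mu_Q|\big((K_N\times\II)\cup(\II\times L_N)\big)\to 0$, this yields $\mu_N\to\mu_Q$ in total variation even for singular $\mu_Q$, and the strip control is exactly what forces $\alpha_{Q_N}<\infty$. Second, the paper never obtains (and does not need) absolute summability: writing $\mu_N-\mu_{N-1}=\zeta_N\mu_{D_N}+\xi_N\mu_{E_N}$ one has $|\zeta_N|,|\xi_N|\ge 1$, so removing parentheses from the telescoping series would give a divergent series. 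The fix is a term-splitting trick: replace the $N$-th bracket by $NM_N$ copies of itself divided by $NM_N$, with $M_N>|\xi_N|$; then any partial sum of the unbracketed series differs from a partial sum of the convergent bracketed series by at most $O(1/N)$ in total variation, yielding condition~(ii)(b) without $\sum_n|\gamma_n|<\infty$.
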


This result can be seen as a characterization of bivariate quasi-copulas that induce a signed measure on $[0,1]^2$, so it gives an answer to the open problem \cite[Problem~4]{AriMesDeB20} mentioned above in the bivariate case. However, we do not consider the bivriate case to be completely resolved, since it would still be beneficial to obtain a characterization of measure-inducing quasi-copulas in operationally simpler terms.

The paper is structured as follows. In Section~\ref{sec:pre} we recall some basic notions from measure theory and some results on bivariate quasi-copulas that we will need in our proofs.
The main part of the paper is devoted to the proof of Theorem~\ref{thm:main}.
Assuming a quasi-copula $Q$ induces a signed measure $\mu_Q$, we construct in Section~\ref{sec:measure} a sequence of measure-inducing quasi-copulas $Q_N$ that converge to $Q$ and whose induced measures converge to $\mu_Q$. In addition, all these quasi-copulas are linear combinations of copulas.
In Section~\ref{sec:proof} we convert the sequence $Q_N$ into a series of multiples of copulas $C_n$, and finally prove Theorem~\ref{thm:main}.
An example that demonstrates our result is given in Section~\ref{sec:example}.

\section{Preliminaries}\label{sec:pre}

Throughout the paper we will denote the unit interval by $\II=[0,1]$ and the unit square by $\II^2$. A function $Q \colon \II^2 \to \II$ is a (bivariate) \emph{quasi-copula} if it satisfies the following conditions:
\begin{enumerate}[$(i)$]
    \item $Q$ is \emph{grounded}: $Q(x,0)=Q(0,y)=0$ for all $x,y \in \II$,
    \item $Q$ has \emph{uniform marginals}: $Q(x,1)=x$ and $Q(1,y)=y$ for all $x,y \in \II$,
    \item $Q$ is \emph{increasing} in each variable,
    \item $Q$ is \emph{$1$-Lipschitz}:
    $$|Q(x_2,y_2)-Q(x_1,y_1)| \le |x_2-x_1|+|y_2-y_1|$$
    for all $x_1,y_1,x_2,y_2 \in \II$.
\end{enumerate}
A function $Q \colon \II^2 \to \II$ that satisfies conditions $(i)$, $(ii)$, and
\begin{enumerate}[$(i)$]
\setcounter{enumi}{4}
    \item $Q$ is \emph{$2$-increasing}:
    $$Q(x_2,y_2)-Q(x_1,y_2)-Q(x_2,y_1)+Q(x_1,y_1) \ge 0$$
    for all $x_1,y_1,x_2,y_2 \in \II$ with $x_1 \le x_2$ and $y_1 \le y_2$,
\end{enumerate}
is called a (bivariate) \emph{copula}.

Let $(X,\mathcal{A})$ be a measurable space equipped with a finite signed measure $\mu$.
Let $\mu=\mu^+-\mu^-$ be the Jordan decomposition of measure $\mu$, i.e., $\mu^+$ and $\mu^-$ are finite positive measures with disjoint supports.
If $S^+$ and $S^-$ denote the supports of $\mu^+$ and $\mu^-$, respectively, then $\mu^+(A)=\mu(A \cap S^+)$ and $\mu^-(A)=-\mu(A \cap S^-)$ for all $A \in \mathcal{A}$.
The positive measure $|\mu|=\mu^+ + \mu^-$ is called the \emph{total variation measure} of $\mu$. It satisfies the inequality $|\mu(A)| \le |\mu|(A)$ for all $A \in \mathcal{A}$. The \emph{total variation norm} of $\mu$ is given by
\begin{equation}\label{eq:TV}
    \|\mu\|_{TV}=|\mu|(X)=\mu^+(X)+\mu^-(X) =\mu(S^+)-\mu(S^-).
\end{equation}
The vector space of all finite signed measures on $(X,\mathcal{A})$ equipped with the total variation norm is a Banach space.
For two finite measures $\mu_1$ and $\mu_2$ on $\mathcal{A}$ we write $\mu_1 \le \mu_2$ if $\mu_1(A) \le \mu_2(A)$ for all $A \in \mathcal{A}$.
In particular, we have $-\mu^- \le \mu \le \mu^+$ for any finite signed measure $\mu$.

The Borel $\sigma$-algebras on $\II$ and $\II^2$ will be denoted by $\BB(\II)$ and $\BB(\II^2)$, respectively. Note that $\BB(\II^2) =\BB(\II) \otimes \BB(\II)$ is the smallest $\sigma$-algebra on $\II^2$ that contains all rectangles of the form $R=[0,x]\times [0,y]$ for some $x,y \in \II$. It also contains all rectangles that are either closed of open on any of their four sides. In particular, it contains all half-open rectangles of the form $R=(x_1,x_2]\times (y_1,y_2]$ for some $x_1,x_2,y_1,y_2 \in \II$ with $x_1<x_2$ and $y_1<y_2$.

A bivariate quasi-copula $Q$ is said to \emph{induce a signed measure} on $\BB(\II^2)$ if there exist a signed measure $\mu_Q$ on $\BB(\II^2)$ such that $\mu_Q([0,x] \times[0,y])=Q(x,y)$ for all $(x,y) \in \II^2$. Measure $\mu_Q$ is automatically finite and for any half-open rectangle $R=(x_1,x_2]\times (y_1,y_2]  \subseteq \II^2$ we have $\mu_Q(R)=V_Q(R)$, where
$$V_Q(R)=Q(x_2,y_2)-Q(x_1,y_2)-Q(x_2,y_1)+Q(x_1,y_1)$$
is the \emph{volume} of $R$ with respect to $Q$.
Since quasi-copulas are continuous functions, the equality $\mu_Q(R)=V_Q(R)$ holds also if the rectangle $R$ is either open or closed on any of its sides, because the $\mu_Q$ measure of a vertical or horizontal segment is $0$. For example,
\begin{align*}
    &\mu_Q\big((x_1,x_2]\times \{y_0\}\big)\\
    &=\mu_Q\Big(\bigcap_{n=1}^\infty (x_1,x_2]\times (\tfrac{n-1}{n}y_0,y_0]\Big)
    =\lim_{n \to \infty} \mu_Q\big((x_1,x_2]\times (\tfrac{n-1}{n}y_0,y_0]\big)\\
    &=\lim_{n \to \infty} \big(Q(x_2,y_0)-Q(x_1,y_0)-Q(x_2,\tfrac{n-1}{n}y_0)+Q(x_1,\tfrac{n-1}{n}y_0) \big)\\
    &=Q(x_2,y_0)-Q(x_1,y_0)-Q(x_2,y_0)+Q(x_1,y_0)=0.
\end{align*}
It is well known that any copula $C$ induces a (positive) measure $\mu_C$ on $\BB(\II^2)$ and this measure is \emph{stochastic} in the sense that $\mu_C(A \times \II)=\mu_C(\II \times A)=\lambda(A)$, where $\lambda$ denotes the Lebesgue measure on $\II$.

We recall a result from \cite{DolKuzSto24} that will be crucial for our constructions. For a positive integer $m$ we will denote $[m]=\{1,2,\ldots,m\}$.

\begin{theorem}[{\cite[Theorem~10]{DolKuzSto24}}]\label{thm:LCC}
    For a bivariate quasi-copula $Q$ the following conditions are equivalent.
    \begin{enumerate}[$(i)$]
        \item There exist bivariate copulas $A$ and $B$ and real numbers $\alpha$ and $\beta$ such that
        \begin{equation*}
            Q(x,y)=\alpha A(x,y) + \beta B(x,y) \qquad\text{for all $(x,y) \in \II^2$.}
        \end{equation*}
    \item Quasi-copula $Q$ satisfies the condition $\alpha_Q <\infty$, where
    \begin{equation*}
        \alpha_Q=\displaystyle\sup_{n \geq 1} \left\{ \max_{k \in [2^n]} 2^n \sum_{l=1}^{2^n} V_Q(R_{kl}^{n})^+ \, , \, \max_{l \in [2^n]} 2^n \sum_{k=1}^{2^n} V_Q(R_{kl}^{n})^+ \right\},
    \end{equation*}
    $R_{kl}^{n}=[\tfrac{k-1}{2^n},\tfrac{k}{2^n}] \times [\tfrac{l-1}{2^n},\tfrac{l}{2^n}]$ for all $k,l \in [2^n]$, and $x^+=\max\{x,0\}$ for all $x \in \RR$.
    \end{enumerate}
\end{theorem}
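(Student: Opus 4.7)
The argument splits into two directions.

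For $(ii) \Rightarrow (i)$, let $\mu$ denote the total-variation limit of $\sum_n \gamma_n \mu_{C_n}$; since the space of finite signed Borel measures on $\II^2$ under $\|\cdot\|_{TV}$ is Banach, $\mu$ is a well-defined finite signed Borel measure. The inequality $|\nu(A)| \le \|\nu\|_{TV}$ shows that TV convergence implies pointwise set-convergence, so $\sum_{n=1}^N \gamma_n \mu_{C_n}(A) \to \mu(A)$ for every Borel $A$. Taking $A = [0,x]\times[0,y]$ gives $\sum_{n=1}^N \gamma_n C_n(x,y) \to \mu([0,x]\times[0,y])$, while condition $(a)$ yields $\sum_{n=1}^N \gamma_n C_n(x,y) \to Q(x,y)$. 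Uniqueness of limits forces $\mu_Q := \mu$, establishing $(i)$.

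For $(i) \Rightarrow (ii)$, my plan follows the paper's two-step roadmap. \emph{Step A:} construct a sequence of quasi-copulas $Q_N$, each a finite linear combination of copulas (equivalently, with $\alpha_{Q_N}<\infty$, by Theorem~\ref{thm:LCC}), such that $Q_N \to Q$ uniformly and $\mu_{Q_N} \to \mu_Q$ in TV. The natural construction is dyadic cell-wise on the grid $\{R_{kl}^N\}_{k,l \in [2^N]}$: on each cell, replace $\mu_Q|_{R_{kl}^N}$ by a signed measure with the same cell mass $V_Q(R_{kl}^N)$ and with Lebesgue row/column marginals inside the cell, so that $Q_N$ still has uniform marginals globally. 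One then verifies $\alpha_{Q_N}<\infty$ for each fixed $N$: on sub-grids finer than $2^{-N}$ the cell-local structure bounds the positive-part row/column sums by $|V_Q(R_{kl}^N)|$, while on coarser grids these sums are finite sums over a fixed grid.

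\emph{Step B:} assemble the sequence into a series. Writing $Q_N = \alpha_N A_N + \beta_N B_N$ via Theorem~\ref{thm:LCC} and telescoping,
\begin{equation*}
    Q \;=\; Q_1 \;+\; \sum_{N=1}^\infty (Q_{N+1} - Q_N),
\end{equation*}
each difference expands into a signed combination of at most four copulas. The subtlety is that an unrestricted four-term expansion produces intermediate partial sums of the form $Q_N + Q_{N+1}$, which are not close to $Q$; to secure $(a)$ and $(b)$ at \emph{every} partial sum (not only at block endpoints), one must refine either the subsequence or the decompositions — for example, by arranging the Step A construction so that $A_{N+1}, B_{N+1}$ arise from $A_N, B_N$ by small perturbations with bounded coefficients, or by re-expressing each small difference $Q_{N+1} - Q_N$ directly as a small-coefficient signed combination of copulas.

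The main obstacle is Step A when $\mu_Q$ has a nontrivial singular part: any cell-wise \emph{uniform} spread yields an absolutely continuous $\mu_{Q_N}$, which cannot converge in TV to a measure with singular support. The cell-local construction must therefore replicate (after rescaling) the internal structure of $\mu_Q|_{R_{kl}^N}$ rather than smear it out, while simultaneously keeping $Q_N$ a genuine quasi-copula (the $1$-Lipschitz and monotonicity conditions severely restrict how signed mass may be arranged within a cell) and keeping $\alpha_{Q_N}$ finite. Securing these three structural properties together with the two convergences $Q_N \to Q$ uniformly and $\mu_{Q_N} \to \mu_Q$ in TV is where I expect the technical heart of the proof to lie.
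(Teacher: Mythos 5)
Your proposal does not engage with the statement it is supposed to prove. The statement is Theorem~\ref{thm:LCC} (imported from \cite[Theorem~10]{DolKuzSto24}): the equivalence between $(i)$ $Q$ being a \emph{finite} linear combination $\alpha A+\beta B$ of two copulas, and $(ii)$ finiteness of the combinatorial quantity $\alpha_Q$ built from positive parts of dyadic cell volumes. Your two directions instead address Theorem~\ref{thm:main}: your ``$(ii)\Rightarrow(i)$'' argument manipulates a series $\sum_n\gamma_n\mu_{C_n}$ and invokes ``condition $(a)$'' and ``condition $(b)$'', which exist only in Theorem~\ref{thm:main}$(ii)$, not in Theorem~\ref{thm:LCC}$(ii)$; and your ``$(i)\Rightarrow(ii)$'' starts from the hypothesis that $Q$ induces a signed measure, which is Theorem~\ref{thm:main}$(i)$, not the two-copula decomposition of Theorem~\ref{thm:LCC}$(i)$. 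Worse, your Step~A explicitly uses Theorem~\ref{thm:LCC} as a known equivalence (``each a finite linear combination of copulas (equivalently, with $\alpha_{Q_N}<\infty$, by Theorem~\ref{thm:LCC})''), so the result you were asked to prove is assumed as a black box. Nothing in the proposal shows either that a decomposition $Q=\alpha A+\beta B$ forces $\alpha_Q<\infty$ (the easy direction: with $\alpha\ge 0\ge\beta$ one has $V_Q(R^n_{kl})^+\le\alpha V_A(R^n_{kl})$ and hence $2^n\sum_l V_Q(R^n_{kl})^+\le\alpha$ by stochasticity of $\mu_A$), or, conversely, how to build the two copulas $A$ and $B$ from the bound $\alpha_Q<\infty$ (the substantive direction, carried out in \cite{DolKuzSto24}). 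Note also that the present paper offers no proof of Theorem~\ref{thm:LCC}; it cites it, so there is nothing in the paper to compare your argument against for this statement.

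Even read charitably as an attempt at Theorem~\ref{thm:main}, the proposal is incomplete in the place you yourself flag: a cell-wise uniform redistribution on the full dyadic grid produces absolutely continuous measures $\mu_{Q_N}$, which cannot converge in total variation to a $\mu_Q$ with a singular part, and you leave unresolved how to replicate the internal structure of $\mu_Q$ inside cells while preserving the quasi-copula constraints and the bound on $\alpha_{Q_N}$. The paper avoids this entirely by modifying $Q$ only on thin exceptional strips $(K_N\times\II)\cup(\II\times L_N)$ of small $|\mu_Q|$-measure and leaving $\mu_Q$ untouched elsewhere (Subsections~\ref{subs:sets}--\ref{subs:measures}), so that the TV error is controlled by $|\mu_Q|((K_N\times\II)\cup(\II\times L_N))\to 0$ regardless of any singular part. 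Your Step~B concern about intermediate partial sums is legitimate and is handled in Section~\ref{sec:proof} by splitting each telescoping increment into $N M_N$ equal pieces with $M_N>|\xi_N|$, but that machinery again belongs to the proof of Theorem~\ref{thm:main}, not of the cited Theorem~\ref{thm:LCC}.
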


Informally speaking, what the coefficient $\alpha_Q$ does, is approximately detect at most how much positive mass quasi-copula $Q$ accumulates on any vertical/horizontal strip relative to the width/height of the strip. With increasing $n$ the detection is more and more accurate.

\section{Measure induced by a quasi-copula}\label{sec:measure}

We assume throughout this section that $Q$ is a bivariate quasi-copula that induces a signed measure $\mu_Q$ on $\BB(\II^2)$. We will denote by $\mu_Q=\mu_Q^+ -\mu_Q^-$ the Jordan decomposition of measure $\mu_Q$.
Since $\mu_Q^+(\II^2)-\mu_Q^-(\II^2)=1$, both $\mu_Q^+$ and $\mu_Q^-$ are finite measures. Measure $\mu_Q$ is stochastic, but measures $\mu_Q^+$ and $\mu_Q^-$ are not, unless $Q$ is a copula, in which case $\mu_Q^+=\mu_Q$ is stochastic and $\mu_Q^-$ is the zero measure. 

The goal of this section is to construct a sequence of bivariate quasi-copulas $Q_N$ with the following properties:
\begin{enumerate}[$(a)$]
    \item sequence $Q_N$ converges to $Q$ uniformly,
    \item for every $N$, $Q_N$ induces a signed measure $\mu_N$,
    \item for every $N$, $Q_N$ is a linear combination of two copulas,
    \item sequence $\mu_N$ converges to $\mu_Q$ in the total variation norm.
\end{enumerate}

Let us give an outline of the construction, which is split into several steps to make it easier to follow.
The main idea is to start with condition $(c)$, using Theorem~\ref{thm:LCC}.
We need to approximate quasi-copula $Q$, which need not satisfy $\alpha_Q<\infty$ (see condition~$(ii)$ of Theorem~\ref{thm:LCC}), with a quasi-copula $Q_N$, that does satisfy $\alpha_{Q_N}<\infty$.
To this end we identify sets of the form $K_N \times \II$ and $\II \times L_N$ which cause $\alpha_Q$ to be greater than $cN$ for some fixed normalising constant $c$. These sets are defined in Subsection~\ref{subs:sets}.
Quasi-copula $Q_N$ (for every $N \ge 1$) is constructed in Subsection~\ref{subs:quasi-copulas} by "smoothing out" the mass distribution of $Q$ on the set $(K_N \times \II) \cup (\II \times L_N)$ and leaving it unchanged elsewhere.
For this to work, the sets $K_N$ and $L_N$ need to be constructed carefully, because, to ensure property~$(a)$, they need to have small Lebesgue measure, i.e., in the limit when $N$ goes to infinity the measure must tend to $0$, and, to ensure property~$(d)$, the sets $(K_N \times \II) \cup (\II \times L_N)$ need to have small $|\mu_Q|$ measure in the same sense.
Both of these properties are verified in Subsection~\ref{subs:properties}.
We then prove property~$(b)$ by explicitly constructing signed measures $\mu_N$ and showing with a direct calculation that they are induced by quasi-copulas $Q_N$.
This is done in Subsection~\ref{subs:measures}, where property~$(d)$ is also verified.
Finally, in Subsection~\ref{subs:decomposition} we prove that quasi-copulas $Q_N$ satisfy condition~$(ii)$ of Theorem~\ref{thm:LCC}, which then give us property~$(c)$ and consequently also property $(a)$.

\subsection{Construction of sets \texorpdfstring{$K_N$}{K\_N}}\label{subs:sets}\phantom{}\medskip

We start with the construction of sets $K_N$, the sets $L_N$ will be defined later.
For an integer $n \ge 0$ we introduce a family of open intervals
\begin{equation*}
    \mathcal{P}_n=\big\{(\tfrac{i-1}{2^n},\tfrac{i}{2^n}) \mid i \in [2^n]\big\},
\end{equation*}
which essentially form a partition of $\II$.
For integers $n \ge 0$ and $N \ge 1$ let
\begin{equation}\label{eq:J_family}
    \mathcal{J}_{n,N}=\Big\{S \in \mathcal{P}_n \mid 2^n \mu_Q^+(S \times \II) > N \cdot \mu_Q^+(\II^2)\Big\} \subseteq \mathcal{P}_n \qquad\text{and}\qquad J_{n,N}=\bigcup_{S \in \mathcal{J}_{n,N}} S.
\end{equation}
Note that for every $S \in \mathcal{P}_n$ we have $\lambda(S)=\frac{1}{2^n}$, so that
\begin{equation}\label{eq:J_cond}
    S \in \mathcal{J}_{n,N} \qquad\text{if and only if}\qquad S \in \mathcal{P}_n \text{ and }\frac{\mu_Q^+(S \times \II)}{\lambda(S)} > N \cdot \mu_Q^+(\II^2).
\end{equation}
To make sense of what follows, we make the following remark.
Intuitively, the sets $J_{n,N} \times \II$ are the "bad" strips, which make $\alpha_Q$ from condition $(ii)$ of Theorem~\ref{thm:LCC} large and possibly infinite.
We will later "smooth out" the mass distribution of $Q$ on the bad strips in order to make $\alpha_Q$ finite. For this to work, we actually need to slightly enlarge the sets $J_{n,N}$, so that the smoothing will also affect the boundary of the bad strips.

For every $n \ge 1$ and $S \in \mathcal{P}_n$ there exists a uniquely determined $\widehat{S} \in \mathcal{P}_{n-1}$ such that $S \subseteq \widehat{S}$. For all $N \ge 1$ let
\begin{equation}\label{eq:K_family_0}
    \KK_{0,N}=\big\{\widehat{S} \mid S \in \mathcal{J}_{1,N}\big\} \subseteq \mathcal{P}_0 \qquad\text{and}\qquad K_{0,N}=\bigcup_{A \in \KK_{0,N}} A,
\end{equation}
and define inductively for all $n \ge 1$
\begin{equation}\label{eq:K_family}
    \KK_{n,N}=\Big\{\widehat{S} \mid S \in \mathcal{J}_{n+1,N},\  S \cap \bigcup_{k=0}^{n-1} K_{k,N} = \emptyset\Big\} \subseteq \mathcal{P}_n \qquad\text{and}\qquad K_{n,N}=\bigcup_{A \in \KK_{n,N}} A.
\end{equation}

We give an example to demonstrate the sets introduced above.

\begin{example}\label{ex:sets_K}
Let $Q$ be a quasi-copula with mass distributed as depicted in Figure~\ref{fig:example_Q} left, where the unit square $\II^2$ is divided into $16 \times 16$ small squares of dimensions $\frac{1}{16} \times \frac{1}{16}$.
The dark gray squares contain a mass of $\frac{1}{16}$ while the light gray squares contain a mass of $-\frac{1}{16}$ distributed uniformly over the square. All other squares contain no mass.

\begin{figure}[ht]
    \centering
    \includegraphics[width=0.43\textwidth]{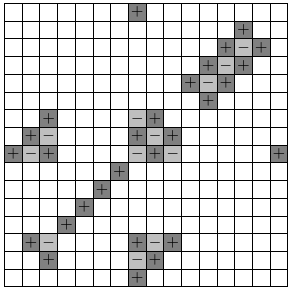} ~~
    \includegraphics[width=0.56\textwidth]{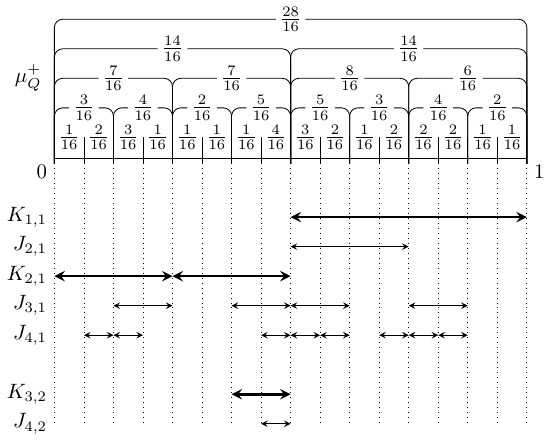}
    \caption{Mass distribution of a quasi-copula $Q$ from Example~\ref{ex:sets_K} (left) and the corresponding sets $J_{n,N}$ and $K_{n,N}$ (right). Each dark gray square contains a mass of $\frac{1}{16}$ and each light gray square contains a mass of $-\frac{1}{16}$.}
    \label{fig:example_Q}
\end{figure}

The corresponding (nonempty) sets $J_{n,N}$ and $K_{n,N}$, where $0\le n\le 4$ and $1\le N \le 2$, are depicted on Figure~\ref{fig:example_Q} right, along with the values $\mu_Q^+(S \times \II)$ for $S \in \mathcal{P}_0 \cup \mathcal{P}_1 \cup \mathcal{P}_2 \cup \mathcal{P}_3 \cup \mathcal{P}_4$ on top (note that $\mu_Q^+(\II^2)=\frac{28}{16}$).
In particular,
\begin{equation*}
J_{0,1}=\emptyset,\quad
J_{1,1}=\emptyset,\quad
J_{2,1}=(\tfrac{2}{4},\tfrac{3}{4}),\quad
J_{3,1}=(\tfrac{1}{8},\tfrac{2}{8})\cup(\tfrac{3}{8},\tfrac{4}{8})\cup(\tfrac{4}{8},\tfrac{5}{8})\cup(\tfrac{6}{8},\tfrac{7}{8}),
\end{equation*}
\begin{equation*}
J_{4,1}=(\tfrac{1}{16},\tfrac{2}{16})\cup(\tfrac{2}{16},\tfrac{3}{16})\cup(\tfrac{7}{16},\tfrac{8}{16})\cup(\tfrac{8}{16},\tfrac{9}{16})\cup(\tfrac{9}{16},\tfrac{10}{16})\cup(\tfrac{11}{16},\tfrac{12}{16})\cup(\tfrac{12}{16},\tfrac{13}{16})\cup(\tfrac{13}{16},\tfrac{14}{16}),
\end{equation*}
\begin{equation*}
K_{0,1}=\emptyset,\quad
K_{1,1}=(\tfrac{1}{2},1),\quad
K_{2,1}=(0,\tfrac{1}{4})\cup(\tfrac{1}{4},\tfrac{2}{4}),\quad
K_{3,1}=\emptyset,\quad
K_{4,1}=\emptyset,
\end{equation*}
\begin{equation*}
J_{0,2}=\emptyset,\quad
J_{1,2}=\emptyset,\quad
J_{2,2}=\emptyset,\quad
J_{3,2}=\emptyset,\quad
J_{4,2}=(\tfrac{7}{16},\tfrac{8}{16}),
\end{equation*}
\begin{equation*}
K_{0,2}=\emptyset,\quad
K_{1,2}=\emptyset,\quad
K_{2,2}=\emptyset,\quad
K_{3,2}=(\tfrac{3}{8},\tfrac{4}{8}),\quad
K_{4,2}=\emptyset.
\end{equation*}

\end{example}

Next, we give some basic properties of the sets defined above.

\begin{lemma}\label{lem:J_n_N}
For all $n \ge 1$ we have
\begin{equation*}
J_{n,N} \subseteq \bigcup_{k=0}^{n-1} K_{k,N}.
\end{equation*}
\end{lemma}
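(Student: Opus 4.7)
The plan is to fix $S \in \mathcal{J}_{n,N}$ and show $S \subseteq \bigcup_{k=0}^{n-1} K_{k,N}$ by a short case analysis based on whether the inductive definition \eqref{eq:K_family} of $\mathcal{K}_{n-1,N}$ picks up $\widehat{S}$. No genuine induction on $n$ is needed; instead, the argument will hinge on a dyadic nesting property of the intervals in $\mathcal{P}_k$ and $\mathcal{P}_n$, which I would record as a preliminary observation.

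The observation I would prove first is: for $k \le n$, every $S \in \mathcal{P}_n$ is contained in a unique $A \in \mathcal{P}_k$ and disjoint from all the others. This is because $S = (\tfrac{i-1}{2^n}, \tfrac{i}{2^n})$ contains no dyadic point $\tfrac{j}{2^k}$ in its interior (such a point would force the integer $j \cdot 2^{n-k}$ to satisfy $i-1 < j \cdot 2^{n-k} < i$, which is impossible), so $S$ lies strictly between two consecutive level-$k$ dyadic points. Since each $K_{k,N}$ is a pairwise disjoint union of open intervals from $\mathcal{P}_k$, it follows that for every $k < n$ the intersection $S \cap K_{k,N}$ is either all of $S$ or empty.

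With this dichotomy in hand, pick $S \in \mathcal{J}_{n,N}$ and distinguish two cases. If $S \cap \bigcup_{k=0}^{n-2} K_{k,N} = \emptyset$ (a vacuous condition when $n=1$), then $S$ satisfies exactly the requirement singled out in \eqref{eq:K_family}, so $\widehat{S} \in \mathcal{K}_{n-1,N}$ and therefore $S \subseteq \widehat{S} \subseteq K_{n-1,N}$. Otherwise $S \cap K_{k,N} \ne \emptyset$ for some $0 \le k \le n-2$, and the nesting observation upgrades this to $S \subseteq K_{k,N}$. Either way $S \subseteq \bigcup_{k=0}^{n-1} K_{k,N}$, and taking the union over $S \in \mathcal{J}_{n,N}$ gives the lemma. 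The main point to be careful about is the nesting claim and the $n=1$ edge case; the argument also quietly uses that the intervals in $\mathcal{P}_n$ are \emph{open}, which is what yields the clean ``inside or disjoint'' dichotomy when intersecting with $K_{k,N}$.
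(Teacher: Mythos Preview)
Your proof is correct and is essentially the same argument as the paper's: a case split on whether $S \cap \bigcup_{k=0}^{n-2} K_{k,N}$ is empty, using the definition of $\mathcal{K}_{n-1,N}$ in one case and the dyadic refinement property in the other. The paper phrases this as induction on $n$, but the inductive hypothesis is never actually invoked, so your observation that no genuine induction is needed is spot on.
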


\begin{proof}
    We prove this by induction on $n$. From equations \eqref{eq:J_family} and \eqref{eq:K_family_0} if follows that $J_{1,N}=\bigcup_{S \in \mathcal{J}_{1,N}} S \subseteq \bigcup_{S \in \mathcal{J}_{1,N}} \widehat{S}=K_{0,N}$, so the claim holds for $n=1$.
    Suppose it holds for some $n \ge 1$ and let $S \in \mathcal{J}_{n+1,N}$.
    Then, by equation \eqref{eq:K_family}, either $\widehat{S} \in \KK_{n,N}$ or $S \cap \bigcup_{k=0}^{n-1} K_{k,N} \ne \emptyset$.
    In the first case $S \subseteq \widehat{S} \subseteq K_{n,N}$.
    In the second case $S \cap K_{k,N} \ne \emptyset$ for some $0 \le k \le n-1$.
    Since $S \in \mathcal{P}_{n+1}$ and $K_{k,N}$ is a union of some subfamily of $\mathcal{P}_k$, it follows that $S \subseteq K_{k,N}$, because $\mathcal{P}_{n+1}$ is a refinement of $\mathcal{P}_k$ for any $k \le n$.
    By equation \eqref{eq:J_family} we conclude that  $J_{n+1,N} \subseteq \bigcup_{k=0}^{n} K_{k,N}$, which finishes the proof.
\end{proof}

\begin{lemma}\label{lem:disjoint}
    For any $N \ge 1$ the sets $K_{n,N}$ with $n \ge 0$ are disjoint.
\end{lemma}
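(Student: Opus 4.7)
My plan is to reduce the disjointness of $K_{n,N}$ and $K_{m,N}$ (for $n<m$) to the claim that every building block $A\in\mathcal{K}_{m,N}$ is disjoint from $K_{n,N}$. The two tools I intend to combine are: first, the dyadic refinement property of the partitions $\mathcal{P}_n$, namely that for $n<m$ any $T\in\mathcal{P}_n$ and $A\in\mathcal{P}_m$ satisfy either $A\subseteq T$ or $A\cap T=\emptyset$; and second, the exclusion clause $S\cap\bigcup_{k=0}^{n-1}K_{k,N}=\emptyset$ that is built directly into the definition \eqref{eq:K_family} of $\mathcal{K}_{n,N}$. The rest is bookkeeping.

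Concretely, I would fix $0\le n<m$ and pick an arbitrary $A\in\mathcal{K}_{m,N}$. Note $m\ge 1$, so by \eqref{eq:K_family} we can write $A=\widehat{S}$ for some $S\in\mathcal{J}_{m+1,N}$ satisfying $S\cap\bigcup_{k=0}^{m-1}K_{k,N}=\emptyset$; since $n\le m-1$, in particular $S\cap K_{n,N}=\emptyset$. Assume for contradiction $A\cap K_{n,N}\ne\emptyset$. Then $A$ meets some $T\in\mathcal{K}_{n,N}\subseteq\mathcal{P}_n$, and the dyadic refinement property forces $A\subseteq T$ (the alternative $A\cap T=\emptyset$ is excluded). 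But then $S\subseteq\widehat{S}=A\subseteq T\subseteq K_{n,N}$, contradicting $S\cap K_{n,N}=\emptyset$. Hence $A\cap K_{n,N}=\emptyset$, and taking unions over $A\in\mathcal{K}_{m,N}$ yields $K_{m,N}\cap K_{n,N}=\emptyset$. The case $m=0$ is vacuous.

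The only point worth treating carefully is the dyadic refinement step, since the elements of $\mathcal{P}_n$ are open intervals with dyadic endpoints. For $n<m$ the endpoints of $T\in\mathcal{P}_n$ are dyadic rationals of level at most $m$, so any $A\in\mathcal{P}_m$ either sits entirely to the left of $T$, entirely to the right, or entirely inside $T$; a partial overlap would require an endpoint of $A$ strictly between the two endpoints of $T$, which is impossible at the common dyadic scale. I do not expect any real obstacle beyond stating this observation cleanly.
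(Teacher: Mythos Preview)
Your proof is correct and follows essentially the same approach as the paper: both arguments use the dyadic nesting of the families $\mathcal{P}_n$ together with the exclusion clause in the definition of $\mathcal{K}_{m,N}$ to upgrade $S\cap K_{n,N}=\emptyset$ to $\widehat{S}\cap K_{n,N}=\emptyset$, and the paper simply packages this as ``$K_{n,N}$ is disjoint from $\bigcup_{k=0}^{n-1}K_{k,N}$'' rather than treating one pair $(n,m)$ at a time. One small slip in your last paragraph: a partial overlap would force an endpoint of $T$ to lie strictly inside $A$ (not the other way around---endpoints of $A$ can of course lie in $T$ when $A\subseteq T$), and it is this that is impossible since the endpoints of $T$ are multiples of $1/2^m$.
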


\begin{proof}
    If $S \in \mathcal{J}_{n+1,N}$, then $\widehat{S} \in \mathcal{P}_n$, while $\bigcup_{k=0}^{n-1} K_{k,N}$ is a union of some subfamily of $ \bigcup_{k=0}^{n-1} \mathcal{P}_k$.
    Hence, either $S \subseteq\widehat{S} \subseteq \bigcup_{k=0}^{n-1} K_{k,N}$ or $\widehat{S} \cap \bigcup_{k=0}^{n-1} K_{k,N}= \emptyset$.
    So if $S \in \mathcal{J}_{n+1,N}$ and $S \cap \bigcup_{k=0}^{n-1} K_{k,N} = \emptyset$, then $\widehat{S} \cap \bigcup_{k=0}^{n-1} K_{k,N}= \emptyset$.
    By equation \eqref{eq:K_family} this implies that the sets $K_{n,N}$ and $\bigcup_{k=0}^{n-1} K_{k,N}$ are disjoint. The claim now follows.
\end{proof}

Lemma~\ref{lem:disjoint} implies that for every $N \ge 1$ we have a disjoint (double) union
\begin{equation}\label{eq:K_N}
    K_N=\bigcup_{n=0}^\infty K_{n,N} =\bigcup_{n=0}^\infty \bigcup_{A \in \KK_{n,N}} A.
\end{equation}
Next lemma shows that the sequence of sets $K_N$ is decreasing in $N$ with respect to inclusion. 

\begin{lemma}\label{lem:decreasing}
    $K_{N+1} \subseteq K_N$ for all $N \ge 1$. 
\end{lemma}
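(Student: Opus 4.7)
The plan is to show $K_{n,N+1} \subseteq K_N$ for every $n \ge 0$, which, combined with \eqref{eq:K_N}, gives the result.

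First I would record the monotonicity of the family $\mathcal{J}_{n,N}$ in $N$: since the defining inequality in \eqref{eq:J_family} is $2^n \mu_Q^+(S \times \II) > N \mu_Q^+(\II^2)$, a larger $N$ gives a stricter condition, so $\mathcal{J}_{n,N+1} \subseteq \mathcal{J}_{n,N}$ for every $n$. The other ingredient is the dyadic nesting of the partitions $\mathcal{P}_n$: whenever $k \le n$, each interval of $\mathcal{P}_n$ is either entirely contained in a given interval of $\mathcal{P}_k$ or disjoint from it. Consequently, every $\mathcal{P}_n$-interval that meets a union $K_{k,N}$ of $\mathcal{P}_k$-intervals is actually contained in $K_{k,N}$.

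The main step is an induction on $n$. For the base case $n=0$, pick $A \in \KK_{0,N+1}$, say $A=\widehat{S}$ with $S \in \mathcal{J}_{1,N+1}\subseteq \mathcal{J}_{1,N}$; then $A \in \KK_{0,N}$ by \eqref{eq:K_family_0}, so $A \subseteq K_{0,N} \subseteq K_N$. For the inductive step, suppose the claim holds for $0,1,\ldots,n-1$, and take $A=\widehat{S} \in \KK_{n,N+1}$ with $S \in \mathcal{J}_{n+1,N+1} \subseteq \mathcal{J}_{n+1,N}$. There are two possibilities. If $S \cap \bigcup_{k=0}^{n-1}K_{k,N}=\emptyset$, then by \eqref{eq:K_family} we have $\widehat{S} \in \KK_{n,N}$, and hence $A \subseteq K_{n,N} \subseteq K_N$. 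Otherwise $S$ meets some $K_{k,N}$ with $k \le n-1$; then by the dyadic nesting observation (applied to the $\mathcal{P}_{n+1}$-interval $S$ and the $\mathcal{P}_k$-intervals making up $K_{k,N}$) we get $S \subseteq K_{k,N}$, and applying the same observation to the $\mathcal{P}_n$-interval $\widehat{S} \supseteq S$ forces $\widehat{S} \subseteq K_{k,N} \subseteq K_N$ as well.

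I don't anticipate any real obstacle here; the only point that requires a moment of care is the dyadic containment argument, which must be invoked twice (once for $S$, once for $\widehat{S}$) to conclude $A \subseteq K_N$ in the second case, rather than only $S \subseteq K_N$. Once that is in hand, taking the union over $n$ and using \eqref{eq:K_N} immediately yields $K_{N+1}\subseteq K_N$.
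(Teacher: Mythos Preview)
Your proof is correct and follows essentially the same route as the paper: fix an $A=\widehat{S}\in\KK_{n,N+1}$, use $\mathcal{J}_{n+1,N+1}\subseteq\mathcal{J}_{n+1,N}$, and then use dyadic nesting to land $\widehat{S}$ inside some $K_{k,N}$ with $k\le n$. Two small remarks: first, your inductive hypothesis is never invoked, so the induction scaffolding can be dropped and the argument given directly for each $n$; second, the paper replaces your case split by an appeal to Lemma~\ref{lem:J_n_N}, which already guarantees $S\subseteq J_{n+1,N}\subseteq\bigcup_{k=0}^{n}K_{k,N}$ and thereby absorbs your Case~1 into the dyadic-nesting step.
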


\begin{proof}
   Let $A \in \mathcal{K}_{n,N+1}$ for some $n \ge 0$. Then $A=\widehat{S}$ for some $S \in \mathcal{J}_{n+1,N+1}$.
   Clearly, $\mathcal{J}_{n+1,N+1} \subseteq \mathcal{J}_{n+1,N}$, so that $S \in \mathcal{J}_{n+1,N}$.
   From equation \eqref{eq:J_family} and Lemma~\ref{lem:J_n_N} it follows that $S \subseteq J_{n+1,N} \subseteq \bigcup_{k=0}^{n} K_{k,N}= \bigcup_{k=0}^{n} \bigcup_{B \in \KK_{k,N}} B$, so there exists $0 \le k \le n$ and $B \in \KK_{k,N}$ such that $S \cap B \ne \emptyset$. Since $S \in \mathcal{P}_{n+1}$ and $B \in \mathcal{P}_k$ with $k \le n$, we infer $S \subseteq B$ and consequently even $A=\widehat{S} \subseteq B$, because $k<n+1$. We have thus shown that $K_{n,N+1} \subseteq \bigcup_{k=0}^n K_{k,N}$, which implies $K_{N+1} \subseteq K_N$. 
\end{proof}

\subsection{Measure-theoretic properties of sets \texorpdfstring{$K_N$}{K\_N} and \texorpdfstring{$L_N$}{L\_N}}\label{subs:properties}\phantom{}\medskip

Note that all subsets of $\II$ considered above are open, so they are Borel measurable. We will now show that the Lebesgue measure $\lambda$ of $K_N$ is small.

\begin{lemma}\label{lem:lambda_K_N}
    $\lambda(K_N) \le \frac{2}{N}$ for all $N \ge 1$.
\end{lemma}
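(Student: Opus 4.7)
The plan is to bound $\lambda(K_N)$ by summing the Lebesgue measures of the constituent sets $K_{n,N}$, and for each of them, use the defining strict inequality of $\mathcal{J}_{n+1,N}$ (via equation \eqref{eq:J_cond}) to trade a factor of $\lambda$ for a factor of $\mu_Q^+$.

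More concretely, first I would use Lemma~\ref{lem:disjoint} and equation \eqref{eq:K_N} to write
\[
\lambda(K_N) = \sum_{n=0}^{\infty} \lambda(K_{n,N}) = \sum_{n=0}^{\infty} \sum_{A \in \KK_{n,N}} \lambda(A),
\]
where the inner sum ranges over disjoint elements of $\mathcal{P}_n$, each having Lebesgue measure $\tfrac{1}{2^n}$. For every $A \in \KK_{n,N}$, the definition in equations \eqref{eq:K_family_0} and \eqref{eq:K_family} supplies at least one $S_A \in \mathcal{J}_{n+1,N}$ with $\widehat{S_A}=A$ and (for $n \ge 1$) $S_A \cap \bigcup_{k=0}^{n-1} K_{k,N} = \emptyset$; fix such a choice. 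Because $S_A \in \mathcal{P}_{n+1}$ and $\widehat{S_A}=A$, we have $\lambda(A)=2\lambda(S_A)$, and the defining condition of $\mathcal{J}_{n+1,N}$ gives
\[
\lambda(S_A) < \frac{\mu_Q^+(S_A \times \II)}{N\,\mu_Q^+(\II^2)},
\qquad\text{hence}\qquad
\lambda(A) < \frac{2\,\mu_Q^+(S_A \times \II)}{N\,\mu_Q^+(\II^2)}.
\]

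The key observation that finishes the proof is that the chosen witnesses $\{S_A\}_{n,A}$ are pairwise disjoint. Within a fixed $n$, they are distinct members of the partition family $\mathcal{P}_{n+1}$. Across different values of $n$, $S_A \subseteq A \subseteq K_{n,N}$, and the sets $K_{n,N}$ are pairwise disjoint by Lemma~\ref{lem:disjoint}. Therefore the product sets $S_A \times \II$ are also pairwise disjoint, contained in $\II^2$, and countable additivity of $\mu_Q^+$ yields
\[
\sum_{n=0}^{\infty} \sum_{A \in \KK_{n,N}} \mu_Q^+(S_A \times \II) \le \mu_Q^+(\II^2).
\]
Combining this with the pointwise bound above gives
\[
\lambda(K_N) \le \frac{2}{N\,\mu_Q^+(\II^2)} \sum_{n=0}^{\infty} \sum_{A \in \KK_{n,N}} \mu_Q^+(S_A \times \II) \le \frac{2}{N},
\]
which is the desired inequality. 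I do not anticipate a serious obstacle; the only thing that needs care is the disjointness claim for the witnesses $S_A$ across different levels $n$, which is exactly what Lemma~\ref{lem:disjoint} was set up to supply.
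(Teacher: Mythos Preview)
Your proof is correct. The overall strategy matches the paper's---express $\lambda(K_N)$ as a double sum over the disjoint pieces $A\in\KK_{n,N}$, convert each $\lambda(A)=2\lambda(S)$ for a suitable $S\in\mathcal{J}_{n+1,N}$, and apply the defining inequality \eqref{eq:J_cond} to pass to $\mu_Q^+$---but the disjointness argument you use to bound the resulting $\mu_Q^+$-sum by $\mu_Q^+(\II^2)$ is different and in fact cleaner.

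The paper does not select a single witness $S_A$ per $A$; instead it enlarges the sum to range over \emph{all} admissible $S\in\mathcal{J}_{n+1,N}$, and then invokes Lemma~\ref{lem:J_n_N} to rewrite the constraint $S\cap\bigcup_{k=0}^{n-1}K_{k,N}=\emptyset$ as $S\subseteq J_{n+1,N}\setminus\bigcup_{k=1}^{n}J_{k,N}$, obtaining a telescoping disjoint union whose $\mu_Q^+$-measure is at most $\mu_Q^+(\II^2)$. Your approach bypasses Lemma~\ref{lem:J_n_N} entirely: by choosing one witness $S_A\subseteq A\subseteq K_{n,N}$ and appealing only to Lemma~\ref{lem:disjoint}, the disjointness of the $S_A$ across levels is immediate. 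This is a shorter path to the same bound; the paper's route, on the other hand, makes the role of the sets $J_{n,N}$ more visible, which ties in with the intuition explained after \eqref{eq:J_cond}.
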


\begin{proof}
    By Lemma~\ref{lem:disjoint} the double union in \eqref{eq:K_N} is disjoint, so we have
\begin{equation*}
    \lambda(K_N)=\sum_{n=0}^\infty \sum_{A \in \KK_{n,N}} \lambda(A).
\end{equation*}
From equations \eqref{eq:K_family_0} and \eqref{eq:K_family} it follows that
\begin{equation*}
    \lambda(K_N)
    \le \sum_{S \in \mathcal{J}_{1,N}} \lambda(\widehat{S}) + \sum_{n=1}^\infty \sum_{\substack{S \in \mathcal{J}_{n+1,N} \\ S \cap \bigcup_{k=0}^{n-1} K_{k,N} = \emptyset}} \lambda(\widehat{S})= \sum_{S \in \mathcal{J}_{1,N}} 2\lambda(S) + \sum_{n=1}^\infty \sum_{\substack{S \in \mathcal{J}_{n+1,N} \\ S \cap \bigcup_{k=0}^{n-1} K_{k,N} = \emptyset}} 2\lambda(S).
\end{equation*}
Using property \eqref{eq:J_cond} we obtain
\begin{equation*}
    \lambda(K_N)
    \le \sum_{S \in \mathcal{J}_{1,N}} \frac{2\mu_Q^+(S \times \II)}{N \cdot \mu_Q^+(\II^2)} + \sum_{n=1}^\infty \sum_{\substack{S \in \mathcal{J}_{n+1,N} \\ S \cap \bigcup_{k=0}^{n-1} K_{k,N} = \emptyset}} \frac{2\mu_Q^+(S \times \II)}{N \cdot \mu_Q^+(\II^2)}.
\end{equation*}
By Lemma~\ref{lem:J_n_N} we have $\bigcup_{k=1}^n J_{k,N} \subseteq \bigcup_{k=0}^{n-1} K_{k,N}$, so we can further estimate
\begin{equation*}
\begin{aligned}
    \lambda(K_N)
    &\le \sum_{S \in \mathcal{J}_{1,N}} \frac{2\mu_Q^+(S \times \II)}{N \cdot \mu_Q^+(\II^2)}
    +\sum_{n=1}^\infty \sum_{\substack{S \in \mathcal{J}_{n+1,N} \\ S \cap \bigcup_{k=1}^n J_{k,N} = \emptyset}} \frac{2\mu_Q^+(S \times \II)}{N \cdot \mu_Q^+(\II^2)}\\
    &=\frac{2}{N \cdot \mu_Q^+(\II^2)} \cdot \sum_{\substack{S \in \mathcal{P}_1 \\ S \subseteq J_{1,N}}} \mu_Q^+(S \times \II)
    +\frac{2}{N \cdot \mu_Q^+(\II^2)} \cdot  \sum_{n=1}^\infty \sum_{\substack{S \in \mathcal{P}_{n+1} \\ S \subseteq J_{n+1,N} \setminus \bigcup_{k=1}^n J_{k,N}}} \mu_Q^+(S \times \II)\\
    & \le \frac{2}{N \cdot \mu_Q^+(\II^2)} \cdot \mu_Q^+(J_{1,N} \times \II)
    +\frac{2}{N \cdot \mu_Q^+(\II^2)} \cdot  \sum_{n=1}^\infty \mu_Q^+\Big(\Big(J_{n+1,N} \setminus \bigcup_{k=1}^n J_{k,N}\Big) \times \II\Big)\\
    &= \frac{2}{N \cdot \mu_Q^+(\II^2)} \cdot \mu_Q^+\Big(\bigcup_{n=1}^\infty J_{n,N} \times \II\Big) \le \frac{2}{N}.
\end{aligned}
\end{equation*}
\end{proof}

We can now give one of the key points of this construction. Let
\begin{equation}\label{eq:K}
    K=\bigcap_{N=1}^\infty K_N.
\end{equation}
The first crucial property of the set $K$ is that its Lebesgue measure is $0$.

\begin{lemma}\label{lem:lambda_K}
    We have $\lambda(K)=\displaystyle\lim_{N \to \infty} \lambda(K_N)=0$.
\end{lemma}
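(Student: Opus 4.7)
The plan is to obtain this as an immediate consequence of the two preceding lemmas together with continuity of the Lebesgue measure from above. Since each $K_N$ is a countable union of open intervals (hence Borel), and $\lambda(K_1) \le 2$ is finite by Lemma~\ref{lem:lambda_K_N}, the hypotheses for continuity from above are in place.

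First, I would invoke Lemma~\ref{lem:decreasing} to note that the sequence $(K_N)_{N\ge 1}$ is decreasing with respect to inclusion, so that $K = \bigcap_{N=1}^\infty K_N$ is the limit of a decreasing sequence of Borel sets whose first term has finite $\lambda$-measure. Continuity of $\lambda$ from above then gives $\lambda(K) = \lim_{N\to\infty} \lambda(K_N)$.

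Next, I would apply Lemma~\ref{lem:lambda_K_N} to bound $\lambda(K_N) \le \tfrac{2}{N}$. Combined with $\lambda(K_N) \ge 0$, the squeeze $0 \le \lim_{N\to\infty}\lambda(K_N) \le \lim_{N\to\infty} \tfrac{2}{N} = 0$ yields that the limit equals $0$, and hence $\lambda(K)=0$ as well.

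There is no real obstacle here; both non-trivial ingredients (the monotonicity and the explicit bound $\lambda(K_N)\le 2/N$) are already established, and the statement is just the combination of these facts with the standard continuity-from-above property for finite measures. The only point worth mentioning is that one cannot skip the finiteness check of $\lambda(K_1)$, since continuity from above requires some tail of the sequence to have finite measure; this is automatic here because $K_N \subseteq \II$ and $\lambda(\II)=1$, or alternatively directly from Lemma~\ref{lem:lambda_K_N} with $N=1$.
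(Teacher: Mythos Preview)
Your proof is correct and follows exactly the same approach as the paper: invoke Lemma~\ref{lem:decreasing} for monotonicity, then apply continuity from above together with the bound $\lambda(K_N)\le \tfrac{2}{N}$ from Lemma~\ref{lem:lambda_K_N}. The paper's version is simply more terse, but the logical content is identical.
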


\begin{proof}
    By Lemma~\ref{lem:decreasing} the sequence of sets $K_N$ is decreasing in $N$. Hence, the conclusion follows directly from equation \eqref{eq:K} and Lemma~\ref{lem:lambda_K_N}.
\end{proof}

Furthermore, the restriction of measure $\mu_Q$ to the set $K \times \II$ is the zero measure.

\begin{lemma}\label{lem:mumu}
    The measures $\mu'$ defined for all $B \in\BB(\II^2)$ by
    \begin{equation*}
    \mu'(B)=\mu_Q\big(B \cap (K \times \II)\big) 
    \end{equation*}
    is the zero measure.
\end{lemma}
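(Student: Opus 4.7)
The plan is to apply Dynkin's $\pi$-$\lambda$ theorem, reducing the claim to showing that $\mu'$ vanishes on the $\pi$-system $\mathcal{P}=\{[0,x]\times[0,y] : x,y\in\II\}$, which generates $\BB(\II^2)$. First, I would check that $\mathcal{L}=\{B\in\BB(\II^2):\mu'(B)=0\}$ is a $\lambda$-system. Since $\mu_Q$ has first marginal $\lambda$, we have $\mu'(\II^2)=\mu_Q(K\times\II)=\lambda(K)=0$ by Lemma~\ref{lem:lambda_K}, so $\mu'(B^c)=\mu'(\II^2)-\mu'(B)=0$ whenever $\mu'(B)=0$, and countable disjoint additivity of the finite signed measure $\mu'$ closes $\mathcal{L}$ under countable disjoint unions. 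It therefore suffices to prove that $\mathcal{P}\subseteq\mathcal{L}$.

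For a fixed rectangle $[0,x]\times[0,y]$, I would pass to finite $N$ using the decreasing sequence $K_N\downarrow K$ from Lemma~\ref{lem:decreasing} and \eqref{eq:K}. Continuity of the finite signed measure $\mu_Q$ on decreasing sequences yields
\begin{equation*}
\mu'([0,x]\times[0,y])=\lim_{N\to\infty}\mu_Q\bigl(([0,x]\cap K_N)\times[0,y]\bigr).
\end{equation*}
The key structural observation is that $K_N$ is a disjoint union of open intervals by \eqref{eq:K_N} and Lemma~\ref{lem:disjoint}, so $[0,x]\cap K_N$ is a countable disjoint union of intervals $I_j$, at most one of which is half-open at $x$, with $\sum_j\lambda(I_j)\le\lambda(K_N)\le 2/N$ by Lemma~\ref{lem:lambda_K_N}. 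Writing $I_j$ with endpoints $a_j<b_j$, the monotonicity and $1$-Lipschitz property of $Q$ in the first variable, together with the vanishing of $\mu_Q$ on vertical segments, give
\begin{equation*}
0\le \mu_Q(I_j\times[0,y])=Q(b_j,y)-Q(a_j,y)\le b_j-a_j=\lambda(I_j),
\end{equation*}
uniformly in $y\in\II$. Summing over $j$ yields $0\le\mu_Q(([0,x]\cap K_N)\times[0,y])\le\lambda(K_N)\le 2/N$, which vanishes as $N\to\infty$, so $\mu'([0,x]\times[0,y])=0$ as required.

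The main obstacle is to avoid bounding the total variation $|\mu_Q|(K_N\times\II)$ directly, as this quantity need not tend to zero as $N\to\infty$ (indeed, this is precisely why a general measure-inducing quasi-copula need not be a linear combination of copulas). The estimate above sidesteps this by exploiting that monotonicity of $Q$ in the first variable forces $\mu_Q$ to be non-negative on every vertical half-strip $I\times[0,y]$, so the signed cancellations of the quasi-copula occur only in the $y$-direction; confining the estimate to a lower-left rectangle therefore eliminates those cancellations entirely and reduces the bound to the thin Lebesgue measure of $K_N$.
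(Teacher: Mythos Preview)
Your argument is correct and follows essentially the same route as the paper: reduce to rectangles $[0,x]\times[0,y]$, use $K_N\downarrow K$ and continuity of the finite signed measure $\mu_Q$, decompose $[0,x]\cap K_N$ into its constituent dyadic intervals, and bound each strip's $\mu_Q$-mass by its Lebesgue width via the $1$-Lipschitz property of $Q$. Your explicit invocation of the $\pi$--$\lambda$ theorem is in fact slightly more careful than the paper's phrasing (the set $\{B:\mu'(B)=0\}$ is only a $\lambda$-system a priori, not a $\sigma$-algebra), and your separate check that $\mu'(\II^2)=0$ via the first marginal of $\mu_Q$ is valid though redundant, since $\II^2\in\mathcal{P}$.

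One correction to your closing commentary: the quantity $|\mu_Q|(K_N\times\II)$ \emph{does} tend to zero --- that is precisely Lemma~\ref{lem:abs_mu_Q}, which is deduced \emph{from} the present lemma. So the reason to avoid that bound here is circularity, not failure. The unboundedness relevant to $\alpha_Q$ and to linear-combination decompositions concerns the density-type quantities $2^n\mu_Q^+(S\times\II)$ on thin dyadic strips $S$, not the total variation on the shrinking sets $K_N\times\II$.
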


\begin{proof}
Let $(x,y) \in \II^2$ be arbitrary and let $R=[0,x] \times [0,y]$. Then
\begin{equation*}
    |\mu'(R)|=\big|\mu_Q\big(([0,x] \cap K)\times [0,y]\big)\big|=\big|\mu_Q\Big(\bigcap_{N=1}^\infty ([0,x] \cap K_N)\times [0,y]\Big)\big|.
\end{equation*}
Taking into account the monotonicity of $K_N$, see Lemma~\ref{lem:decreasing}, and the fact that $\mu_Q$ is a finite signed measure, we obtain
\begin{equation*}
    |\mu'(R)|=\big|\lim_{N \to \infty} \mu_Q\big(([0,x] \cap K_N)\times [0,y]\big)\big|.
\end{equation*}
Using equation \eqref{eq:K_N} and the fact that the union in \eqref{eq:K_N} is disjoint by Lemma~\ref{lem:disjoint}, we obtain
\begin{equation}\label{eq:abs}
\begin{aligned}
    |\mu'(R)| &=\Big|\lim_{N \to \infty} \sum_{n=0}^\infty \sum_{A \in \KK_{n,N}} \mu_Q\big(([0,x] \cap A)\times [0,y]\big)\Big|\\
    &\le \lim_{N \to \infty} \sum_{n=0}^\infty \sum_{A \in \KK_{n,N}}  \big|\mu_Q\big(([0,x] \cap A)\times [0,y]\big)\big|.
\end{aligned}
\end{equation}
For any rectangle $(a,b\} \times [0,y]$, where $\}$ stands for either $)$ or $]$, the continuity, groundedness and $1$-Lipschitz property of quasi-copulas imply
\begin{equation*}
    \big|\mu_Q\big((a,b\}\times [0,y]\big)\big|=|Q(b,y)-Q(a,y)|\le |b-a|=\lambda((a,b\}),
\end{equation*}
regardless of whether $(a,b\}$ is empty or not.
Applying this inequality to the rectangle $([0,x] \cap A)\times [0,y]$ in \eqref{eq:abs} and using also Lemma~\ref{lem:lambda_K} we obtain
\begin{equation*}
    |\mu'(R)| \le \lim_{N \to \infty} \sum_{n=0}^\infty \sum_{A \in \KK_{n,N}}  \lambda([0,x] \cap A)
    \le \lim_{N \to \infty} \sum_{n=0}^\infty \sum_{A \in \KK_{n,N}}  \lambda(A)= \lim_{N \to \infty} \lambda(K_N)=\lambda(K)=0
\end{equation*}
and consequently $\mu'(R)=0$.
To prove that $\mu'$ is the zero measure on $\BB(\II^2)$ we use a standard argument. By the above the collection
$\BB'=\{B \in \BB(\II^2) \mid \mu'(B)=0\}$ is a $\sigma$-algebra that contains all rectangles of the form $[0,x] \times [0,y]$. Hence, $\BB'=\BB(\II^2)$ and $\mu'$ is the zero measure.
\end{proof}

As a consequence to Lemma~\ref{lem:mumu}, we obtain the second crucial property of the set $K$, which will be essential for the convergence of measures constructed later.

\begin{lemma}\label{lem:abs_mu_Q}
    We have
    \begin{equation*}
    |\mu_Q|\big(K \times \II\big)=\displaystyle\lim_{N \to \infty} |\mu_Q|\big(K_N \times \II\big)=0.
    \end{equation*}
\end{lemma}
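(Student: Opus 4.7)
The plan is to deduce the lemma directly from Lemma~\ref{lem:mumu} combined with the Jordan decomposition of $\mu_Q$, with no further work of any substance beyond a bookkeeping argument.

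First, since $K_{N+1}\subseteq K_N$ by Lemma~\ref{lem:decreasing} and $\bigcap_{N=1}^\infty K_N = K$ by \eqref{eq:K}, the sets $K_N\times\II$ form a decreasing sequence of Borel sets intersecting to $K\times\II$. As $|\mu_Q|$ is a finite positive measure on $\BB(\II^2)$, continuity from above yields
\[
|\mu_Q|(K\times\II) = \lim_{N\to\infty} |\mu_Q|(K_N\times\II),
\]
so the lemma reduces to showing $|\mu_Q|(K\times\II)=0$.

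For this, I would invoke the Jordan decomposition $\mu_Q = \mu_Q^+ - \mu_Q^-$ recalled in Section~\ref{sec:pre}: if $S^+$ and $S^-$ are the (measurable) supports of $\mu_Q^+$ and $\mu_Q^-$, then for every $A\in\BB(\II^2)$ we have $\mu_Q^+(A) = \mu_Q(A\cap S^+)$ and $\mu_Q^-(A) = -\mu_Q(A\cap S^-)$. Applying Lemma~\ref{lem:mumu} with $B = S^+$ and $B = S^-$ gives
\[
\mu_Q^+(K\times\II) = \mu_Q\bigl(S^+\cap(K\times\II)\bigr) = 0, \qquad \mu_Q^-(K\times\II) = -\mu_Q\bigl(S^-\cap(K\times\II)\bigr) = 0,
\]
so $|\mu_Q|(K\times\II) = \mu_Q^+(K\times\II)+\mu_Q^-(K\times\II) = 0$, as required.

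There is no genuine obstacle here: the heavy lifting has already been carried out in Lemma~\ref{lem:mumu}, which controls $\mu_Q$ on \emph{every} Borel subset of $K\times\II$, and the present statement is essentially its reformulation for the total variation measure. The only point requiring a moment's care is that one must pass to the positive and negative parts via the Hahn sets $S^\pm$ rather than trying to bound $|\mu_Q|$ directly, since Lemma~\ref{lem:mumu} produces zero values of the signed measure, not of its absolute variation.
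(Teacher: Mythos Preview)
Your proposal is correct and mirrors the paper's proof essentially line for line: the paper also first uses Lemma~\ref{lem:decreasing} and continuity from above of the finite measure $|\mu_Q|$ to reduce to showing $|\mu_Q|(K\times\II)=0$, and then applies Lemma~\ref{lem:mumu} to the Hahn sets (denoted $P^+,P^-$ there) to conclude $\mu_Q^+(K\times\II)=\mu_Q^-(K\times\II)=0$.
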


\begin{proof}
The sequences $K_N \times \II$ is decreasing in $N$ by Lemma~\ref{lem:decreasing}, and we have 
$\bigcap_{N=1}^\infty (K_N \times \II)=K \times \II$ by equation \eqref{eq:K}.
This implies
\begin{equation*}
    \lim_{N \to \infty} |\mu_Q|\big(K_N \times \II\big)=|\mu_Q|\big(K \times \II\big).
\end{equation*}

By Lemma~\ref{lem:mumu}, the measure $\mu'$ defined in that lemma is a zero measure.
Denote the support of $\mu_Q^+$ by $P^+$. Then
\begin{equation*}
    \mu_Q^+\big(K \times \II\big)=\mu_Q\big(P^+ \cap (K \times \II)\big)=\mu'(P^+)=0,
\end{equation*}
and similarly $\mu_Q^-\big(K \times \II)=0$. The claim now follows since $|\mu_Q|=\mu_Q^+ + \mu_Q^-$.

\end{proof}

Switching the first and second coordinate in the above constructions, we can analogously define for all $n \ge 0$ and $N \ge 1$ the sets
\begin{equation*}
    \mathcal{I}_{n,N}=\Big\{S \in \mathcal{P}_n \mid 2^n \mu_Q^+(\II \times S) > N \cdot \mu_Q^+(\II^2)\Big\} \subseteq \mathcal{P}_n,\qquad I_{n,N}=\bigcup_{S \in \mathcal{I}_{n,N}} S,
\end{equation*}
\begin{equation*}
    \LL_{0,N}=\big\{\widehat{S} \mid S \in \mathcal{I}_{1,N}\big\} \subseteq \mathcal{P}_0,\qquad L_{0,N}=\bigcup_{A \in \LL_{0,N}} A,
\end{equation*}
\begin{equation*}
    \LL_{n,N}=\Big\{\widehat{S} \mid S \in \mathcal{I}_{n+1,N},\  S \cap \bigcup_{k=0}^{n-1} L_{k,N} = \emptyset\Big\} \subseteq \mathcal{P}_n,\qquad L_{n,N}=\bigcup_{A \in \LL_{n,N}} A,
\end{equation*}
\begin{equation}\label{eq:L_N}
    L_N=\bigcup_{n=0}^\infty L_{n,N} =\bigcup_{n=0}^\infty \bigcup_{A \in \LL_{n,N}} A, \qquad L=\bigcap_{N=1}^\infty L_N.
\end{equation}
By symmetry the versions of Lemmas~\ref{lem:lambda_K_N}--\ref{lem:abs_mu_Q} for the sets $L_N$ and $L$ also hold. In particular, $\lambda(L_N) \le \frac{2}{N}$ for all $N \ge 1$, $\lambda(L)=0$, the measure $\mu''$ defined for all $B \in \BB(\II^2)$ by
$$\mu''(B)=\mu_Q\big(B \cap (\II \times L)\big)$$
is the zero measure, and the following lemma holds.
\begin{lemma}\label{lem:abs_mu_Q_2}
    We have
    \begin{equation*}
    |\mu_Q|\big(\II \times L\big)=\displaystyle\lim_{N \to \infty} |\mu_Q|\big(\II \times L_N\big)=0.
    \end{equation*}
\end{lemma}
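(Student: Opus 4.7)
My plan is to follow verbatim the strategy used for Lemma~\ref{lem:abs_mu_Q}, invoking the symmetry between the two coordinates that the author has already exploited to set up the analogues of Lemmas~\ref{lem:lambda_K_N}--\ref{lem:mumu} for the sets $L_N$ and $L$. The two facts I will lean on are (1) $\II \times L_N$ is decreasing in $N$ with $\bigcap_{N=1}^\infty (\II \times L_N) = \II \times L$ by \eqref{eq:L_N}, and (2) the measure $\mu''$ is the zero measure, both of which are stated right before the lemma.

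First, since $|\mu_Q|$ is a finite positive measure and the sequence $\II \times L_N$ is decreasing with intersection $\II \times L$, continuity from above yields
\[
\lim_{N \to \infty} |\mu_Q|\big(\II \times L_N\big) = |\mu_Q|\big(\II \times L\big),
\]
which is the first of the two equalities in the statement.

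For the second equality I need $|\mu_Q|(\II \times L) = 0$. Let $S^+$ and $S^-$ denote the supports of $\mu_Q^+$ and $\mu_Q^-$, as in the preliminaries. Using $\mu_Q^+(A) = \mu_Q(A \cap S^+)$ together with the fact that $\mu''$ is the zero measure, I obtain
\[
\mu_Q^+\big(\II \times L\big) = \mu_Q\big(S^+ \cap (\II \times L)\big) = \mu''(S^+) = 0,
\]
and analogously $\mu_Q^-(\II \times L) = -\mu_Q(S^- \cap (\II \times L)) = -\mu''(S^-) = 0$. Since $|\mu_Q| = \mu_Q^+ + \mu_Q^-$, adding these gives $|\mu_Q|(\II \times L) = 0$, completing the chain of equalities.

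There is really no obstacle here: the argument is a direct transcription of the proof of Lemma~\ref{lem:abs_mu_Q} under the swap of the two coordinates, and all of the heavy lifting (the symmetric analogues of Lemmas~\ref{lem:lambda_K_N}--\ref{lem:mumu}, in particular the crucial bound $\lambda(L_N) \le \frac{2}{N}$ that forces $\mu''$ to vanish) has already been absorbed into the statement preceding the lemma.
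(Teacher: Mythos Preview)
Your proof is correct and follows exactly the approach the paper intends: the paper does not give an explicit proof of this lemma but simply states that by symmetry the analogues of Lemmas~\ref{lem:lambda_K_N}--\ref{lem:abs_mu_Q} hold for $L_N$ and $L$, and your argument is a faithful transcription of the proof of Lemma~\ref{lem:abs_mu_Q} under the coordinate swap.
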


\subsection{Construction of quasi-copulas \texorpdfstring{$Q_N$}{Q\_N}}\label{subs:quasi-copulas}\phantom{}\medskip

We now turn our attention to the construction of a sequence of quasi-copulas $Q_N$.
For every positive integer $N$ denote the complements
\begin{equation*}
    K_N'=\II \setminus K_N \qquad\text{and}\qquad L_N'=\II \setminus L_N.
\end{equation*}
By equations \eqref{eq:K_N} and \eqref{eq:L_N} the sets $K_N$ and $L_N$ are open, so the sets $K_N'$ and $L_N'$ are closed and contain $0$ and $1$.
The restriction $Q|_{K_N' \times L_N'}$ of quasi-copula $Q$ to the set $K_N' \times L_N'$ is a sub-quasi-copula according to \cite[Definition~2.3]{QueSem05}. We can extend sub-quasi-copula $Q|_{K_N' \times L_N'}$ to a quasi-copula $Q_N$ using the formula from the proof of \cite[Theorem~2.3]{QueSem05}, which we recall now.
Let $(x,y)$ be an arbitrary point in $\II^2$ and denote
\begin{align*}
    x_1 &=\max \big([0,x] \cap K_N'\big), &\quad& x_2=\min \big([x,1] \cap K_N'\big), \\
    y_1 &=\max \big([0,y] \cap L_N'\big), &\quad& y_2=\min \big([y,1] \cap L_N'\big).
\end{align*}
Then $x_1 \le x \le x_2$ and $y_1 \le y \le y_2$. If $x \in K_N'$, then $x_1=x_2$, and if $y \in L_N'$, then $y_1=y_2$.
Now let
\begin{equation*}
    \alpha =\begin{cases}
        \frac{x-x_1}{x_2-x_1}, & x_1<x_2,\\
        1,& x_1=x_2,
    \end{cases}
    \quad\text{and}\quad
    \beta =\begin{cases}
        \frac{y-y_1}{y_2-y_1}, & y_1<y_2,\\
        1,& y_1=y_2.
    \end{cases}
\end{equation*}
Then the value of the extension at $(x,y)$ is given by
\begin{equation}\label{eq:Q_N}
\begin{aligned}
    Q_N(x,y) &=(1-\alpha)(1-\beta) Q(x_1,y_1)+(1-\alpha)\beta Q(x_1,y_2) \\
    &+\alpha(1-\beta) Q(x_2,y_1)+\alpha\beta Q(x_2,y_2).
\end{aligned}
\end{equation}
Note that $Q_N$ is bilinear on $[x_1,x_2] \times [y_1,y_2]$.
In particular, if $R$ is a closed rectangle such that only the vertices of $R$ lie in $K_N' \times L_N'$, then $Q_N$ is bilinear on $R$ (e.g. $R_5$ on Figure~\ref{fig:extension}).
If only the left and right side of $R$ lie in $K_N' \times L_N'$, then $Q_N$ is linear in $x$ on $R$ but not necessary in $y$ (e.g. $R_2$ and $R_8$ on Figure~\ref{fig:extension}),
if only the bottom and top side of $R$ lie in $K_N' \times L_N'$, then $Q_N$ is linear in $y$ on $R$ but not necessary in $x$ (e.g. $R_4$ and $R_6$ on Figure~\ref{fig:extension}),
and if the whole $R$ lies in $K_N' \times L_N'$, then $Q_N$ on $R$ is not linear in any coordinate in general (e.g. $R_1$, $R_3$, $R_7$, and $R_9$ on Figure~\ref{fig:extension}).
So the extension $Q_N$ is as linear as possible. This means that it spreads mass on $\II \setminus (K_N' \times L_N')=(K_N \times \II) \cup (\II \times L_N)$ uniformly in certain directions, which will be important later on.
\begin{figure}[ht]
    \centering
    \includegraphics{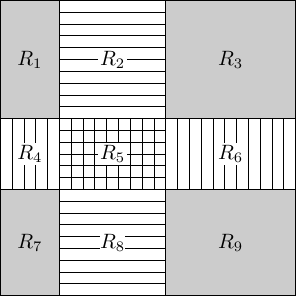}
    \caption{The extension $Q_N$ of the sub-quasi-copula $Q|_{K_N' \times L_N'}$ defined on the shaded region $K_N' \times L_N'$.
    The straight sections indicate in which direction the extension $Q_N$ is linear on $(K_N \times \II) \cup (\II \times L_N)$.}
    \label{fig:extension}
\end{figure}

\subsection{Construction of measures \texorpdfstring{$\mu_N$}{mu\_N}}\label{subs:measures}\phantom{}\medskip

We aim to prove that quasi-copulas $Q_N$ induce signed measures on $\BB(\II^2)$. We now construct these measures.
From equations \eqref{eq:K_N} and \eqref{eq:L_N} it follows that the sets $K_N$ and $L_N$ are countable unions of disjoint open intervals, say
\begin{equation}\label{eq:inter}
    K_N= \bigcup_{i=0}^\infty (a_i^N,b_i^N) \quad\text{and}\quad L_N =\bigcup_{j=0}^\infty (c_i^N,d_i^N),
\end{equation}
where each $(a_i^N,b_i^N)$ and $(c_i^N,d_i^N)$ is a member of $\bigcup_{n=0}^\infty \mathcal{P}_n$.
We are assuming here without loss of generality that the unions are infinite. If they are finite the arguments are essentially the same.

For a set $E \in \BB(\II)$ with $\lambda(E)>0$, define a probability measure $\lambda_E$ on $\BB(\II)$ by
\begin{equation*}
    \lambda_E(A)=\lambda(A \cap E)/\lambda(E)
\end{equation*}
for all $A \in \BB(\II)$, and finite signed measures $\varphi_E$ and $\psi_E$ on $\BB(\II)$ by
\begin{equation}\label{eq:phi_psi}
    \varphi_E(A)=\mu_Q\big(E \times (A \cap L_N')\big) \qquad\text{and}\qquad \psi_E(A)=\mu_Q\big((A \cap K_N') \times E\big)
\end{equation}
for all $A \in \BB(\II)$.
Furthermore, introduce finite signed measures $\muhat_N$ and $\muover_N$ on $\BB(\II^2)$ by
\begin{equation}\label{eq:muhat_N}
\begin{aligned}
    \muhat_N
    &=\sum_{i=0}^\infty \lambda_{(a_i^N,b_i^N)} \times \varphi_{(a_i^N,b_i^N)}
    +\sum_{j=0}^\infty \psi_{(c_j^N,d_j^N)} \times \lambda_{(c_j^N,d_j^N)}\\
    &\hspace{0.5cm}+\sum_{i=0}^\infty \sum_{j=0}^\infty \mu_Q\big((a_i^N,b_i^N)\times(c_j^N,d_j^N)\big) \cdot \lambda_{(a_i^N,b_i^N)} \times \lambda_{(c_j^N,d_j^N)}
\end{aligned}
\end{equation}
and
\begin{equation}\label{eq:muover_N}
    \muover_N(B)=\mu_Q\big(B \cap (K_N' \times L_N')\big)
\end{equation}
for all $B \in \BB(\II^2)$, where $\times$ denotes both the product of measures and the Cartesian product of sets.
The idea behind the definition of $\muhat_N$ is that it should correspond to the measure induced by the the extension $Q_N$ on the set $(K_N \times \II) \cup (\II \times L_N)$.
In particular, the first sum corresponds to the regions where $Q_N$ is linear in the $x$ coordinate (regions $R_2$ and $R_8$ on Figure~\ref{fig:extension}), the second sum corresponds to the regions where $Q_N$ is linear in $y$ coordinate (regions $R_4$ and $R_6$ on Figure~\ref{fig:extension}) and the third sum corresponds to the regions where $Q_N$ is bilinear (region $R_5$ on Figure~\ref{fig:extension}). 

We first need to show that $\muhat_N$ is well defined.

\begin{lemma}\label{lem:muhat_N}
    For every $N \ge 1$ the sum in equation \eqref{eq:muhat_N} converges in the total variation norm, so $\muhat_N$ is a finite signed measure on $\BB(\II^2)$.
\end{lemma}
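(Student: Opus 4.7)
The plan is to prove that each of the three series in \eqref{eq:muhat_N} converges absolutely in the total variation norm. Since the space of finite signed measures on $(\II^2,\BB(\II^2))$ is a Banach space under $\|\cdot\|_{TV}$ (as recalled in Section~\ref{sec:pre}), absolute convergence will automatically yield a finite signed measure $\muhat_N$ as the limit, so the whole lemma reduces to three absolute-convergence estimates.

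Two basic facts make the estimates essentially routine. First, I would invoke the standard product-measure identity $\|\mu_1 \times \mu_2\|_{TV} = \|\mu_1\|_{TV}\cdot\|\mu_2\|_{TV}$ for finite signed measures, which follows from $|\mu_1 \times \mu_2| = |\mu_1|\times |\mu_2|$ via the Jordan decomposition of each factor. Second, each $\lambda_E$ is a probability measure, so $\|\lambda_E\|_{TV}=1$; thus I only need TV bounds for $\varphi_E$ and $\psi_E$. Directly from \eqref{eq:phi_psi}, for any measurable partition $\{A_k\}$ of $\II$,
\begin{equation*}
\sum_k |\varphi_E(A_k)| = \sum_k \bigl|\mu_Q\bigl(E \times (A_k \cap L_N')\bigr)\bigr| \le |\mu_Q|(E \times L_N'),
\end{equation*}
hence $\|\varphi_E\|_{TV} \le |\mu_Q|(E \times L_N')$, and symmetrically $\|\psi_E\|_{TV} \le |\mu_Q|(K_N' \times E)$.

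To finish, I would sum these pointwise bounds using that the intervals $(a_i^N,b_i^N)$ and $(c_j^N,d_j^N)$ are pairwise disjoint by \eqref{eq:inter}, together with $\sigma$-additivity of the positive measure $|\mu_Q|$. The first series is controlled in TV norm by $\sum_i |\mu_Q|((a_i^N,b_i^N) \times L_N') = |\mu_Q|(K_N \times L_N')$, the second by $|\mu_Q|(K_N' \times L_N)$, and the third by $\sum_{i,j} |\mu_Q|((a_i^N,b_i^N) \times (c_j^N,d_j^N)) = |\mu_Q|(K_N \times L_N)$. Each of these is at most $|\mu_Q|(\II^2) < \infty$, giving $\|\muhat_N\|_{TV} \le 3|\mu_Q|(\II^2)$ and establishing the claim.

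The only mildly nontrivial ingredient is the product-measure TV identity for signed measures; everything else is disjoint-union bookkeeping. I do not anticipate any serious obstacle, since the intervals in \eqref{eq:inter} were constructed precisely so that the rectangles $(a_i^N,b_i^N)\times\II$, $\II\times(c_j^N,d_j^N)$, and $(a_i^N,b_i^N)\times(c_j^N,d_j^N)$ tile $K_N\times\II$, $\II\times L_N$, and $K_N\times L_N$ disjointly.
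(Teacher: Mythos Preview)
Your proposal is correct and follows essentially the same route as the paper: both bound $\|\lambda_E \times \varphi_E\|_{TV} \le \|\varphi_E\|_{TV} \le |\mu_Q|(E \times L_N')$ (and symmetrically for $\psi_E$), then sum over the disjoint intervals from \eqref{eq:inter} to control the three series by $|\mu_Q|(K_N \times L_N')$, $|\mu_Q|(K_N' \times L_N)$, and $|\mu_Q|(K_N \times L_N)$, and finish by completeness. One point worth noting: rather than bounding each of these by $|\mu_Q|(\II^2)$ to get your $3|\mu_Q|(\II^2)$, the paper observes that the three rectangles are disjoint with union $(K_N \times \II)\cup(\II \times L_N)$, yielding the sharper estimate $\|\muhat_N\|_{TV} \le |\mu_Q|\big((K_N \times \II)\cup(\II \times L_N)\big)$, which is explicitly reused in the proof of Lemma~\ref{lem:TV_convergence}; your intermediate bounds already give this, so you may as well record it.
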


\begin{proof}
For every $E \in \BB(\II)$ with $\lambda(E)>0$ let $\varphi_E=\varphi_E^+-\varphi_E^-$ be the Jordan decomposition of $\varphi_E$ and denote the supports of $\varphi_E^+$ and $\varphi_E^-$ by $P_E^+$ and $P_E^-$, respectively.
Since $\lambda_E$ is a probability measure, and $\lambda_E \times \varphi_E^+$ and $\lambda_E \times \varphi_E^-$ are positive measures, we can estimate using equation \eqref{eq:TV}
\begin{equation}\label{eq:times1}
\begin{aligned}
    \|\lambda_E \times \varphi_E\|_{TV} &=\|\lambda_E \times \varphi_E^+ - \lambda_E \times \varphi_E^-\|_{TV}\\
    &\le \|\lambda_E \times \varphi_E^+\|_{TV} + \|\lambda_E \times \varphi_E^-\|_{TV}
    =\varphi_E^+(\II)+\varphi_E^-(\II)=\|\varphi_E\|_{TV}.
\end{aligned}
\end{equation}
Using equations \eqref{eq:TV} and \eqref{eq:phi_psi} we estimate
\begin{equation}\label{eq:times2}
\begin{aligned}
    \|\varphi_E\|_{TV}
    &=\varphi_E(P_E^+)-\varphi_E(P_E^-)\\
    &=\mu_Q\big(E \times (P_E^+ \cap L_N')\big)- \mu_Q\big(E \times (P_E^- \cap L_N')\big)\\
    &\le \mu_Q^+\big(E \times (P_E^+ \cap L_N')\big)+ \mu_Q^-\big(E \times (P_E^- \cap L_N')\big)\\
    &\le \mu_Q^+(E \times L_N')+\mu_Q^-(E \times L_N')
    =|\mu_Q|(E \times L_N').
\end{aligned}
\end{equation}
Combining inequalities \eqref{eq:times1} and \eqref{eq:times2} gives
\begin{equation*}
    \|\lambda_E \times \varphi_E\|_{TV} \le |\mu_Q|(E \times L_N'),
\end{equation*}
and similarly we obtain
\begin{equation*}
    \|\psi_E \times \lambda_E\|_{TV} \le |\mu_Q|(K_N' \times E).
\end{equation*}
These two inequalities imply
\begin{equation}\label{eq:norm}
\begin{aligned}
    &\sum_{i=0}^\infty \|\lambda_{(a_i^N,b_i^N)} \times \varphi_{(a_i^N,b_i^N)}\|_{TV}
    +\sum_{j=0}^\infty \|\psi_{(c_j^N,d_j^N)} \times \lambda_{(c_j^N,d_j^N)}\|_{TV}\\
    &\hspace{0.5cm}+\sum_{i=0}^\infty \sum_{j=0}^\infty \big\|\mu_Q\big((a_i^N,b_i^N)\times(c_j^N,d_j^N)\big) \cdot\lambda_{(a_i^N,b_i^N)} \times \lambda_{(c_j^N,d_j^N)}\big\|_{TV}\\
    &\le \sum_{i=0}^\infty |\mu_Q|\big((a_i^N,b_i^N) \times L_N'\big)
    +\sum_{j=0}^\infty |\mu_Q|\big(K_N' \times (c_j^N,d_j^N)\big)+\sum_{i=0}^\infty \sum_{j=0}^\infty \big|\mu_Q\big((a_i^N,b_i^N)\times(c_j^N,d_j^N)\big)\big|\\
    &\le \sum_{i=0}^\infty |\mu_Q|\big((a_i^N,b_i^N) \times L_N'\big)
    +\sum_{j=0}^\infty |\mu_Q|\big(K_N' \times (c_j^N,d_j^N)\big)
    +\sum_{i=0}^\infty \sum_{j=0}^\infty |\mu_Q|\big((a_i^N,b_i^N)\times(c_j^N,d_j^N)\big)\\
    &=|\mu_Q|(K_N \times L_N')+|\mu_Q|(K_N' \times L_N)+|\mu_Q|(K_N \times L_N)\\
    &=|\mu_Q|\big((K_N \times \II) \cup (\II \times L_N)\big)< \infty.
\end{aligned}
\end{equation}
Since the vector space of all finite measures equipped with the total variation norm is a Banach space, this implies that the sum in equation \eqref{eq:muhat_N} converges in the total variation norm and $\muhat_N$ is a finite signed measure.
\end{proof}

We can now prove that quasi-copulas $Q_N$ induce signed measures.
For every $N \ge 1$ define a finite signed measure
\begin{equation*}
    \mu_N=\muhat_N+\muover_N,
\end{equation*}
where $\muhat_N$ and $\muover_N$ are given in equations \eqref{eq:muhat_N} and \eqref{eq:muover_N}. We show with a direct calculation that measure $\mu_N$ is induced by $Q_N$.

\begin{lemma}\label{lem:measure_inducing}
    For every $N \ge 1$ quasi-copula $Q_N$ induces signed measure $\mu_N$.
\end{lemma}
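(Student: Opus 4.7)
My approach is a direct verification that $\mu_N([0,x]\times[0,y]) = Q_N(x,y)$ for every $(x,y)\in\II^2$. Let $x_1,x_2,y_1,y_2,\alpha,\beta$ be as in \eqref{eq:Q_N}. Expanding and regrouping rewrites the bilinear formula for $Q_N(x,y)$ as
\begin{equation*}
    Q_N(x,y) = \mu_Q([0,x_1]\times[0,y_1]) + \alpha\,\mu_Q((x_1,x_2]\times[0,y_1]) + \beta\,\mu_Q([0,x_1]\times(y_1,y_2]) + \alpha\beta\,\mu_Q((x_1,x_2]\times(y_1,y_2]).
\end{equation*}
I would match these four summands to the $\mu_N$-measures of the four pieces in the disjoint decomposition $[0,x]\times[0,y] = A \sqcup B \sqcup C \sqcup D$, where $A=[0,x_1]\times[0,y_1]$, $B=[0,x_1]\times(y_1,y]$, $C=(x_1,x]\times[0,y_1]$, and $D=(x_1,x]\times(y_1,y]$. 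The degenerate cases $x\in K_N'$ (so $x_1=x_2$, $\alpha=1$) and $y\in L_N'$ (so $y_1=y_2$, $\beta=1$) are handled automatically, since then the corresponding pieces collapse to the empty set and the matching summands above vanish.

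The piece $A$ serves as the template. Because $x_1\in K_N'$, no open interval $(a_i^N,b_i^N)$ can straddle $x_1$, so one obtains the disjoint partition $[0,x_1] = ([0,x_1]\cap K_N') \sqcup \bigsqcup_{i:\,b_i^N\le x_1}(a_i^N,b_i^N)$, and analogously for $[0,y_1]$. Taking the product splits $A$ into four types of subrectangle, each captured by exactly one component of $\mu_N = \muover_N + \muhat_N$: the $K_N'\times L_N'$ part by $\muover_N$; the $K_N\times L_N'$ parts by the first sum in \eqref{eq:muhat_N}, using $\lambda_{(a_i^N,b_i^N)}([0,x_1])=1$ and $\varphi_{(a_i^N,b_i^N)}([0,y_1])=\mu_Q((a_i^N,b_i^N)\times([0,y_1]\cap L_N'))$; the $K_N'\times L_N$ parts by the second sum; and the $K_N\times L_N$ parts by the third sum. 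Summing these four contributions reassembles $\mu_Q([0,x_1]\times[0,y_1])$.

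For $B$, writing $(y_1,y_2)=(c_{j_0}^N,d_{j_0}^N)$, the containment $(y_1,y]\subseteq L_N$ annihilates $\muover_N(B)$ and the entire first sum of $\muhat_N$; in the remaining two sums only the index $j=j_0$ survives, and $\lambda_{(c_{j_0}^N,d_{j_0}^N)}((y_1,y])=(y-y_1)/(y_2-y_1)=\beta$ factors out. Combining the two surviving sums along $[0,x_1]$ by the same decomposition used for $A$ then produces $\beta\,\mu_Q([0,x_1]\times(y_1,y_2))$, which equals $\beta\,\mu_Q([0,x_1]\times(y_1,y_2])$ because $\mu_Q$ annihilates horizontal segments by continuity of $Q$. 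Piece $C$ is symmetric, and piece $D$ receives a nonzero contribution only from the third sum in \eqref{eq:muhat_N} at $(i,j)=(i_0,j_0)$, giving $\alpha\beta\,\mu_Q((x_1,x_2)\times(y_1,y_2))=\alpha\beta\,\mu_Q((x_1,x_2]\times(y_1,y_2])$. Adding the four pieces recovers $Q_N(x,y)$.

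The main obstacle I anticipate is purely bookkeeping: for each of the four pieces and each of the three sums in \eqref{eq:muhat_N}, one must track which of the normalized Lebesgue factors $\lambda_{(a_i^N,b_i^N)}(\cdot)$ and $\lambda_{(c_j^N,d_j^N)}(\cdot)$ evaluate to $0$, $1$, $\alpha$, or $\beta$, and then recombine the surviving terms cleanly. No new idea beyond the decomposition is needed, since the three sums in \eqref{eq:muhat_N} were designed precisely so that the bilinear interpolation pattern of $Q_N$ on $(K_N\times\II)\cup(\II\times L_N)$ reproduces the predicted rectangle measures.
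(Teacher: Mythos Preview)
Your proposal is correct and follows essentially the same route as the paper: a direct verification that $\mu_N([0,x]\times[0,y])=Q_N(x,y)$ by splitting the rectangle along the lines $x=x_1$ and $y=y_1$ and matching each piece to the components of $\muhat_N+\muover_N$. The only difference is organizational: the paper presents this as a four-case analysis on whether $x\in K_N$ and $y\in L_N$ (your degenerate cases), whereas you carry out the four-piece decomposition $A\sqcup B\sqcup C\sqcup D$ once and let the degenerate pieces collapse; the underlying computations and the identification $(x_1,x_2)=(a_{i_0}^N,b_{i_0}^N)$, $(y_1,y_2)=(c_{j_0}^N,d_{j_0}^N)$ are identical.
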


\begin{proof}
For any $(x,y) \in \II^2$ we use equations \eqref{eq:muhat_N} and \eqref{eq:muover_N} to get
\begin{equation}\label{eq:mu_N}
\begin{aligned}
    &\mu_N\big([0,x] \times [0,y]\big)\\
    &=\sum_{i=0}^\infty \frac{\lambda\big([0,x] \cap (a_i^N,b_i^N)\big)}{\lambda((a_i^N,b_i^N))} \cdot \mu_Q\big((a_i^N,b_i^N) \times ([0,y] \cap L_N')\big)\\
    &\hspace{0.5cm}+\sum_{j=0}^\infty \mu_Q\big(([0,x] \cap K_N') \times (c_j^N,d_j^N)\big) \cdot \frac{\lambda\big([0,y]\cap (c_j^N,d_j^N)\big)}{\lambda((c_j^N,d_j^N))}\\
    &\hspace{0.5cm}+\sum_{i=0}^\infty \sum_{j=0}^\infty \mu_Q\big((a_i^N,b_i^N)\times(c_j^N,d_j^N)\big) \cdot \frac{\lambda\big([0,x] \cap (a_i^N,b_i^N)\big)}{\lambda((a_i^N,b_i^N))} \cdot \frac{\lambda\big([0,y]\cap (c_j^N,d_j^N)\big)}{\lambda((c_j^N,d_j^N))}\\
    &\hspace{0.5cm}+\mu_Q\big(([0,x] \times [0,y]) \cap (K_N' \times L_N')\big).
\end{aligned}
\end{equation}

We consider four cases, depending on where the point $(x,y)$ lies.

\medskip{\bf Case 1:} $x \notin K_N$ and $y \notin L_N$. On Figure~\ref{fig:extension} this corresponds to $(x,y) \in R_1 \cup R_3 \cup R_7 \cup R_9$.
In this case $x$ is not contained in any $(a_i^N,b_i^N)$ and $y$ is not contained in any $(c_j^N,d_j^N)$ so equation \eqref{eq:mu_N} becomes
\begin{align*}
    &\mu_N\big([0,x] \times [0,y]\big)\\
    &=\sum_{(a_i^N,b_i^N) \subseteq [0,x]} \mu_Q\big((a_i^N,b_i^N) \times ([0,y] \cap L_N')\big)+\sum_{(c_j^N,d_j^N) \subseteq [0,y]} \mu_Q\big(([0,x] \cap K_N') \times (c_j^N,d_j^N)\big)\\
    &\hspace{0.5cm}+\sum_{(a_i^N,b_i^N) \subseteq [0,x]} \ \sum_{(c_j^N,d_j^N) \subseteq [0,y]} \mu_Q\big((a_i^N,b_i^N)\times(c_j^N,d_j^N)\big)\\
    &\hspace{0.5cm}+\mu_Q\big(([0,x] \times [0,y]) \cap (K_N' \times L_N')\big)\\
    &=\mu_Q\big(([0,x] \cap K_N) \times ([0,y] \cap L_N')\big) + \mu_Q\big(([0,x] \cap K_N') \times ([0,y] \cap L_N)\big)\\
    &\hspace{0.5cm}+\mu_Q\big(([0,x] \cap K_N) \times ([0,y] \cap L_N)\big)
    +\mu_Q\big(([0,x] \cap K_N') \times ([0,y] \cap L_N')\big)\\
    &=\mu_Q\big(([0,x] \times [0,y]) \cap (K_N \times L_N')\big) + \mu_Q\big(([0,x] \times [0,y]) \cap (K_N' \times L_N)\big)\\
    &\hspace{0.5cm}+\mu_Q\big(([0,x] \times [0,y]) \cap (K_N \times L_N)\big)
    +\mu_Q\big(([0,x] \times [0,y]) \cap (K_N' \times L_N')\big).
\end{align*}
Since
\begin{equation*}
    (K_N \times L_N') \cup (K_N' \times L_N) \cup (K_N \times L_N) \cup (K_N' \times L_N')=\II^2
\end{equation*}
and this union is disjoint, we conclude
\begin{equation}\label{eq:C1}
\mu_N\big([0,x] \times [0,y]\big)=\mu_Q\big([0,x] \times [0,y]\big)=Q(x,y).
\end{equation}
By equation \eqref{eq:Q_N} this is equal to $Q_N(x,y)$ because $x \in K_N'$ and $y \in L_N'$.

\medskip{\bf Case 2:} $x \in K_N$ and $y \notin L_N$. On Figure~\ref{fig:extension} this corresponds to $(x,y) \in R_2 \cup R_8$.
We may assume without loss of generality that $x \in (a_0^N,b_0^N)$ since the order of the intervals in equation \eqref{eq:inter} is arbitrary.
On the other hand, $y$ is not contained in any $(c_j^N,d_j^N)$.
By splitting $[0,x] \times [0,y] = \big([0,a_0^N] \times [0,y]\big) \cup \big((a_0^N,x] \times [0,y]\big)$, we infer that equation \eqref{eq:mu_N} can be written as
\begin{align*}
    \mu_N\big([0,x] \times [0,y]\big)
    &=\mu_N\big([0,a_0^N] \times [0,y]\big)\\
    &\hspace{0.5cm}+\frac{\lambda((a_0^N,x])}{\lambda((a_0^N,b_0^N))} \cdot \mu_Q\big((a_0^N,b_0^N) \times \big([0,y] \cap L_N'\big)\big)\\
    &\hspace{0.5cm}+\sum_{(c_j^N,d_j^N) \subseteq [0,y]} \mu_Q\big(\big((a_0^N,x] \cap K_N'\big) \times (c_j^N,d_j^N)\big)\\
    &\hspace{0.5cm}+\sum_{(c_j^N,d_j^N) \subseteq [0,y]} \mu_Q\big((a_0^N,b_0^N)\times(c_j^N,d_j^N)\big) \cdot \frac{\lambda((a_0^N,x])}{\lambda((a_0^N,b_0^N))}\\
    &\hspace{0.5cm}+\mu_Q\big(\big((a_0^N,x] \times [0,y]\big) \cap (K_N' \times L_N')\big)
\end{align*}
Note that $a_0^N \notin K_N$, so we may use Case 1 to evaluate the first term in the above expression. Using equation \eqref{eq:C1}, the first term is equal to $\mu_N\big([0,a_0^N] \times [0,y]\big)=Q(a_0^N,y)$. Furthermore, the last term and the first of the two sums $\sum_{(c_j^N,d_j^N) \subseteq [0,y]}$ are equal to $0$ because $(a_0^N,x] \subseteq K_N$.
Hence,
\begin{equation}\label{eq:C2}
\begin{aligned}
    \mu_N\big([0,x] \times [0,y]\big)
    &=Q(a_0^N,y)
    +\frac{x-a_0^N}{b_0^N-a_0^N} \cdot \mu_Q\big((a_0^N,b_0^N) \times ([0,y] \cap L_N')\big)\\
    &\hspace{0.5cm}+\frac{x-a_0^N}{b_0^N-a_0^N} \cdot \sum_{(c_j^N,d_j^N) \subseteq [0,y]} \mu_Q\big((a_0^N,b_0^N)\times(c_j^N,d_j^N)\big)\\
    &=Q(a_0^N,y)
    +\frac{x-a_0^N}{b_0^N-a_0^N} \cdot \mu_Q\big((a_0^N,b_0^N) \times ([0,y] \cap L_N')\big)\\
    &\hspace{0.5cm}+\frac{x-a_0^N}{b_0^N-a_0^N} \cdot \mu_Q\big((a_0^N,b_0^N)\times ([0,y] \cap L_N)\big)\\
    &=Q(a_0^N,y)
    +\frac{x-a_0^N}{b_0^N-a_0^N} \cdot \mu_Q\big((a_0^N,b_0^N) \times [0,y]\big).
\end{aligned}
\end{equation}
Finally, by continuity of $Q$ we can express
\begin{align*}
    \mu_N\big([0,x] \times [0,y]\big)
    &=Q(a_0^N,y)
    +\frac{x-a_0^N}{b_0^N-a_0^N} \big(Q(b_0^N,y)-Q(a_0^N,y)\big)\\
    &=\frac{b_0^N-x}{b_0^N-a_0^N} Q(a_0^N,y)+\frac{x-a_0^N}{b_0^N-a_0^N} Q(b_0^N,y).
\end{align*}
By equation \eqref{eq:Q_N} this is equal to $Q_N(x,y)$ because $x \in K_N$ and $y \in L_N'$.

\medskip{\bf Case 3:} $x \notin K_N$ and $y \in L_N$. On Figure~\ref{fig:extension} this corresponds to $(x,y) \in R_4 \cup R_6$.
This case is treated similarly as Case 2. Assuming $y \in (c_0^N,d_0^N)$, we obtain
\begin{equation*}
    \mu_N\big([0,x] \times [0,y]\big)=\frac{d_0^N-y}{d_0^N-c_0^N} Q(x,c_0^N)+\frac{y-c_0^N}{d_0^N-c_0^N} Q(x,d_0^N),
\end{equation*}
which is again equal to $Q_N(x,y)$ by equation \eqref{eq:Q_N}.

\medskip{\bf Case 4:} $x \in K_N$ and $y \in L_N$. On Figure~\ref{fig:extension} this corresponds to $(x,y) \in R_5$.
We may assume without loss of generality that $x \in (a_0^N,b_0^N)$ and $y \in (c_0^N,d_0^N)$. By splitting $[0,x] \times [0,y] = ([0,x] \times [0,c_0^N]) \cup ([0,x] \times (c_0^N,y])$, we infer that equation \eqref{eq:mu_N} can be written as
\begin{align*}
    \mu_N\big([0,x] \times [0,y]\big)
    &=\mu_N\big([0,x] \times [0,c_0^N]\big)\\
    &\hspace{0.5cm}+\sum_{i=0}^\infty \frac{\lambda\big([0,x] \cap (a_i^N,b_i^N)\big)}{\lambda((a_i^N,b_i^N))} \cdot \mu_Q\big((a_i^N,b_i^N) \times \big((c_0^N,y] \cap L_N'\big)\big)\\
    &\hspace{0.5cm}+\mu_Q\big(\big([0,x] \cap K_N'\big) \times (c_0^N,d_0^N)\big) \cdot \frac{\lambda((c_0^N,y])}{\lambda((c_0^N,d_0^N))}\\
    &\hspace{0.5cm}+\sum_{i=0}^\infty \mu_Q\big((a_i^N,b_i^N)\times(c_0^N,d_0^N)\big) \cdot \frac{\lambda\big([0,x] \cap (a_i^N,b_i^N)\big)}{\lambda((a_i^N,b_i^N))} \cdot \frac{\lambda((c_0^N,y])}{\lambda((c_0^N,d_0^N))}\\
    &\hspace{0.5cm}+\mu_Q\big(\big([0,x] \times (c_0^N,y]\big) \cap (K_N' \times L_N')\big).
\end{align*}
Note that $c_0^N \notin L_N$, so by Case 2, using equation \eqref{eq:C2}, the first term in the above expression is equal to
\begin{equation*}
    \mu_N\big([0,x] \times [0,c_0^N]\big)=Q(a_0^N,c_0^N)+\frac{x-a_0^N}{b_0^N-a_0^N} \cdot \mu_Q\big((a_0^N,b_0^N) \times [0,c_0^N]\big).
\end{equation*}
Furthermore, the last term and the first of the two sums $\sum_{i=0}^\infty$ are equal to $0$ because $(c_0^N,y] \subseteq L_N$. Hence,
\begin{align*}
    \mu_N\big([0,x] \times [0,y]\big)
    &=Q(a_0^N,c_0^N)+\frac{x-a_0^N}{b_0^N-a_0^N} \cdot \mu_Q\big((a_0^N,b_0^N) \times [0,c_0^N]\big)\\
    &\hspace{0.5cm}+\frac{y-c_0^N}{d_0^N-c_0^N} \cdot \mu_Q\big(([0,x] \cap K_N') \times (c_0^N,d_0^N)\big)\\
    &\hspace{0.5cm}+\frac{y-c_0^N}{d_0^N-c_0^N}\cdot \sum_{i=0}^\infty \mu_Q\big((a_i^N,b_i^N)\times(c_0^N,d_0^N)\big) \cdot \frac{\lambda\big([0,x] \cap (a_i^N,b_i^N)\big)}{\lambda((a_i^N,b_i^N))}.
\end{align*}
We can simplify the last sum, also using $x \in (a_0^N,b_0^N)$, to obtain
\begin{equation}\label{eq:intermediate}
\begin{aligned}
    \mu_N\big([0,x] \times [0,y]\big)
    &=Q(a_0^N,c_0^N)+\frac{x-a_0^N}{b_0^N-a_0^N} \cdot \mu_Q\big((a_0^N,b_0^N) \times [0,c_0^N]\big)\\
    &\hspace{0.5cm}+\frac{y-c_0^N}{d_0^N-c_0^N} \cdot \mu_Q\big(([0,x] \cap K_N') \times (c_0^N,d_0^N)\big)\\
    &\hspace{0.5cm}+\frac{y-c_0^N}{d_0^N-c_0^N}\cdot \sum_{(a_i^N,b_i^N) \subseteq [0,x]} \mu_Q\big((a_i^N,b_i^N)\times(c_0^N,d_0^N)\big)\\
    &\hspace{0.5cm}+\frac{x-a_0^N}{b_0^N-a_0^N} \cdot \frac{y-c_0^N}{d_0^N-c_0^N}\cdot \mu_Q\big((a_0^N,b_0^N)\times(c_0^N,d_0^N)\big).
\end{aligned}
\end{equation}
Note that
\begin{equation*}
\bigcup_{(a_i^N,b_i^N) \subseteq [0,x]} (a_i^N,b_i^N)=[0,x] \cap \big(K_N \setminus (a_0^N,x]\big) \qquad\text{and}\qquad K_N' \cup \big(K_N \setminus (a_0^N,x]\big)=\II \setminus (a_0^N,x],
\end{equation*}
so we may combine the second and third row of equation \eqref{eq:intermediate} to get
\begin{align*}
    \mu_N\big([0,x] \times [0,y]\big)
    &=Q(a_0^N,c_0^N)+\frac{x-a_0^N}{b_0^N-a_0^N} \cdot \mu_Q\big((a_0^N,b_0^N) \times [0,c_0^N]\big)\\
    &\hspace{0.5cm}+\frac{y-c_0^N}{d_0^N-c_0^N} \cdot \mu_Q\big([0,a_0^N] \times (c_0^N,d_0^N)\big)\\
    &\hspace{0.5cm}+\frac{x-a_0^N}{b_0^N-a_0^N} \cdot \frac{y-c_0^N}{d_0^N-c_0^N}\cdot \mu_Q\big((a_0^N,b_0^N)\times(c_0^N,d_0^N)\big).
\end{align*}
The continuity of $Q$ implies
\begin{align*}
    \mu_Q\big((a_0^N,b_0^N) \times [0,c_0^N])\big) &=Q(b_0^N,c_0^N)-Q(a_0^N,c_0^N),\\
    \mu_Q\big([0,a_0^N] \times (c_0^N,d_0^N)\big) &=Q(a_0^N,d_0^N)-Q(a_0^N,c_0^N),\\
    \mu_Q\big((a_0^N,b_0^N)\times(c_0^N,d_0^N)\big) &=Q(b_0^N,d_0^N)-Q(a_0^N,d_0^N)-Q(b_0^N,c_0^N)+Q(a_0^N,c_0^N),
\end{align*}
so that
\begin{align*}
    &\mu_N\big([0,x] \times [0,y]\big)\\
    &=Q(a_0^N,c_0^N)+\frac{x-a_0^N}{b_0^N-a_0^N} \cdot \big(Q(b_0^N,c_0^N)-Q(a_0^N,c_0^N)\big)\\
    &\hspace{0.5cm}+\frac{y-c_0^N}{d_0^N-c_0^N} \cdot \big(Q(a_0^N,d_0^N)-Q(a_0^N,c_0^N)\big)\\
    &\hspace{0.5cm}+\frac{x-a_0^N}{b_0^N-a_0^N} \cdot \frac{y-c_0^N}{d_0^N-c_0^N}\cdot \big(Q(b_0^N,d_0^N)-Q(a_0^N,d_0^N)-Q(b_0^N,c_0^N)+Q(a_0^N,c_0^N)\big)\\
    &=\frac{b_0^N-x}{b_0^N-a_0^N} \cdot \frac{d_0^N-y}{d_0^N-c_0^N} \cdot Q(a_0^N,c_0^N)
    +\frac{b_0^N-x}{b_0^N-a_0^N} \cdot \frac{y-c_0^N}{d_0^N-c_0^N}\cdot Q(a_0^N,d_0^N)\\
    &\hspace{0.5cm}+\frac{x-a_0^N}{b_0^N-a_0^N} \cdot \frac{d_0^N-y}{d_0^N-c_0^N} \cdot Q(b_0^N,c_0^N)
    +\frac{x-a_0^N}{b_0^N-a_0^N} \cdot \frac{y-c_0^N}{d_0^N-c_0^N}\cdot Q(b_0^N,d_0^N).
\end{align*}
By equation \eqref{eq:Q_N} this is again equal to $Q_N(x,y)$ because $x \in K_N$ and $y \in L_N$.
\end{proof}

Next step is to establish the convergence of finite signed measures $\mu_N$.

\begin{lemma}\label{lem:TV_convergence}
    The sequence of measures $\mu_N$ converges to $\mu_Q$ in the total variation norm.
\end{lemma}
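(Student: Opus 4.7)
The plan is to exploit the decomposition $\mu_N = \widehat{\mu}_N + \overline{\mu}_N$ and compare it to the corresponding decomposition of $\mu_Q$ along the same pair of complementary sets. Observe first that $\II^2$ is the disjoint union $(K_N' \times L_N') \cup \big((K_N \times \II) \cup (\II \times L_N)\big)$, so for every Borel set $B \subseteq \II^2$ we may write
\begin{equation*}
    \mu_Q(B) = \mu_Q\big(B \cap (K_N' \times L_N')\big) + \mu_Q\big(B \cap \big((K_N \times \II) \cup (\II \times L_N)\big)\big) = \muover_N(B) + \mu_Q\big(B \cap E_N\big),
\end{equation*}
where $E_N = (K_N \times \II) \cup (\II \times L_N)$. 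Consequently
\begin{equation*}
    \mu_N - \mu_Q = \muhat_N - \mu_Q\big(\,\cdot \cap E_N\big),
\end{equation*}
which is the key identity I would build the rest of the argument around.

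From this identity the triangle inequality for $\|\cdot\|_{TV}$ gives
\begin{equation*}
    \|\mu_N - \mu_Q\|_{TV} \le \|\muhat_N\|_{TV} + \big\|\mu_Q(\,\cdot \cap E_N)\big\|_{TV} \le \|\muhat_N\|_{TV} + |\mu_Q|(E_N).
\end{equation*}
Now the chain of inequalities proved inside Lemma~\ref{lem:muhat_N} (culminating in equation~\eqref{eq:norm}) already shows that $\|\muhat_N\|_{TV} \le |\mu_Q|(E_N)$, so altogether
\begin{equation*}
    \|\mu_N - \mu_Q\|_{TV} \le 2\,|\mu_Q|(E_N) \le 2\,|\mu_Q|(K_N \times \II) + 2\,|\mu_Q|(\II \times L_N).
\end{equation*}

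Finally, Lemma~\ref{lem:abs_mu_Q} gives $|\mu_Q|(K_N \times \II) \to 0$ and Lemma~\ref{lem:abs_mu_Q_2} gives $|\mu_Q|(\II \times L_N) \to 0$ as $N \to \infty$, which closes the argument. There is no real obstacle: all the hard work has been absorbed into Lemmas~\ref{lem:muhat_N}, \ref{lem:abs_mu_Q}, and~\ref{lem:abs_mu_Q_2}. The only small care needed is to recognize that $\muover_N$ exactly captures the restriction of $\mu_Q$ to the "unchanged" region $K_N' \times L_N'$, so that the discrepancy $\mu_N - \mu_Q$ is supported, in a suitable sense, on the "smoothed" region $E_N$, whose $|\mu_Q|$-measure has already been shown to vanish in the limit.
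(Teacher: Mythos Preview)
Your argument is correct and follows essentially the same route as the paper: both use the bound $\|\muhat_N\|_{TV}\le |\mu_Q|(E_N)$ from \eqref{eq:norm}, the identity $\mu_Q-\muover_N=\mu_Q(\,\cdot\cap E_N)$, and Lemmas~\ref{lem:abs_mu_Q} and~\ref{lem:abs_mu_Q_2} to send $|\mu_Q|(E_N)\to 0$. The only cosmetic difference is that the paper treats $\muhat_N\to 0$ and $\muover_N\to\mu_Q$ as two separate limits before adding them, whereas you combine them into a single estimate $\|\mu_N-\mu_Q\|_{TV}\le 2|\mu_Q|(E_N)$.
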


\begin{proof}
    From the proof of Lemma~\ref{lem:muhat_N}, see in particular inequality \eqref{eq:norm}, it follows that
    \begin{equation*}
        \|\muhat_N\|_{TV}\le |\mu_Q|\big((K_N \times \II) \cup (\II \times L_N)\big) \le |\mu_Q|\big(K_N \times \II\big)+|\mu_Q|\big(\II \times L_N\big).
    \end{equation*}
    When $N$ tends to infinity, the right-hand side converges to $0$ by Lemmas~\ref{lem:abs_mu_Q} and \ref{lem:abs_mu_Q_2}, so the sequence of measures $\muhat_N$ converges to the zero measure in the total variation norm.
    Furthermore, note that
    \begin{equation*}
        (\mu_Q-\muover_N)(B)=\mu_Q(B)-\mu_Q\big(B \cap (K_N' \times L_N')\big)=\mu_Q\big(B \cap \big((K_N \times \II) \cup (\II \times L_N)\big)\big)
    \end{equation*}
    for all $B \in \BB(\II^2)$. So we can use equation \eqref{eq:TV} and an analogous calculation as in inequality \eqref{eq:times2} to obtain
    \begin{equation*}
        \|\mu_Q-\muover_N\|_{TV}
        \le |\mu_Q|\big((K_N \times \II) \cup (\II \times L_N)\big).
    \end{equation*}
    Using a similar argument as for $\muhat_N$, we infer that the sequence $\mu_Q-\muover_N$ converges to the zero measure, so that $\muover_N$ converges to $\mu_Q$ in the total variation norm.
    We conclude that the sequence of finite measures $\mu_N=\muhat_N+\muover_N$ converges to $\mu_Q$.
\end{proof}

\subsection{Decomposition of quasi-copulas \texorpdfstring{$Q_N$}{Q\_N}}\label{subs:decomposition}\phantom{}\medskip

We have so far shown that quasi-copulas $Q_N$ induce signed measures $\mu_N$ and the measures $\mu_N$ converge in the total variation norm to measure $\mu_Q$, which is induced by $Q$.

The final property of quasi-copulas $Q_N$ that we need is that every $Q_N$ is a linear combination of two copulas. To prove this we will show that each $Q_N$ satisfies condition~$(ii)$ of Theorem~\ref{thm:LCC}.

\begin{lemma}\label{lem:combination_of_copulas}
    For every $N \ge 1$ there exist bivariate copulas $A_N$ and $B_N$ and real numbers $\alpha_N$ and $\beta_N$ such that $Q_N=\alpha_N A_N+\beta_N B_N$. Consequently, $\mu_N=\alpha_N \mu_{A_N}+\beta_N \mu_{B_N}$.
\end{lemma}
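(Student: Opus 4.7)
The plan is to verify condition~$(ii)$ of Theorem~\ref{thm:LCC} for each $Q_N$; once $\alpha_{Q_N}<\infty$ is established, the decomposition $Q_N=\alpha_NA_N+\beta_NB_N$ is furnished by that theorem. Since $V_{Q_N}(R_{kl}^n)=\mu_N(R_{kl}^n)$ by Lemma~\ref{lem:measure_inducing}, and since $|\mu_N|$ charges no horizontal or vertical line segment (which follows, via the Jordan decomposition, from $\mu_N$ having no mass on such segments, itself a consequence of the $1$-Lipschitz property of $Q_N$ as in Section~\ref{sec:pre}), we may pass to disjoint half-open rectangles to obtain
\begin{equation*}
    \sum_{l=1}^{2^n}V_{Q_N}(R_{kl}^n)^+\leq\mu_N^+(S\times\II)\leq|\mu_N|(S\times\II),\qquad S=\bigl(\tfrac{k-1}{2^n},\tfrac{k}{2^n}\bigr)\in\mathcal{P}_n,
\end{equation*}
and by symmetry the same reduction handles horizontal strips with $L_N$ in place of $K_N$. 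It therefore suffices to prove a uniform estimate $2^n|\mu_N|(S\times\II)\leq C_N$ for every $n\geq 1$ and $S\in\mathcal{P}_n$.

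I split the proof into two cases. \emph{Case 1: $S\not\subseteq K_N$.} By Lemma~\ref{lem:J_n_N}, $J_{n,N}\subseteq K_N$, so $S\not\subseteq J_{n,N}$; since $\mathcal{J}_{n,N}\subseteq\mathcal{P}_n$, this forces $S\notin\mathcal{J}_{n,N}$ and hence $\mu_Q^+(S\times\II)\leq N\mu_Q^+(\II^2)\lambda(S)$ by \eqref{eq:J_cond}. A termwise triangle-inequality estimate of the three sums in \eqref{eq:muhat_N}, combined with $|\muover_N(B)|\leq|\mu_Q|(B\cap(K_N'\times L_N'))$, gives $|\mu_N|(S\times\II)\leq|\mu_Q|(S\times\II)$; using the stochastic marginal $\mu_Q(S\times\II)=\lambda(S)$ we rewrite $|\mu_Q|(S\times\II)=2\mu_Q^+(S\times\II)-\lambda(S)\leq 2N\mu_Q^+(\II^2)\lambda(S)$. \emph{Case 2: $S\subseteq K_N$.} By \eqref{eq:K_N} and Lemma~\ref{lem:disjoint}, $S$ lies in a unique component $I=(a_{i_0}^N,b_{i_0}^N)\in\KK_{m,N}$ of $K_N$, with $m\leq n$. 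Only the $i=i_0$ summands of the first and third sums in \eqref{eq:muhat_N} contribute on $S\times\II$ (the second sum vanishes as $S\cap K_N'=\emptyset$), and $\muover_N(S\times\II)=0$; a termwise bound then yields $|\mu_N|(S\times\II)\leq\tfrac{\lambda(S)}{\lambda(I)}\,|\mu_Q|(I\times\II)$. The crucial observation is that $I\notin\mathcal{J}_{m,N}$: for $m=0$ this is trivial since $\mathcal{J}_{0,N}=\emptyset$ whenever $N\geq 1$, and for $m\geq 1$ the definition \eqref{eq:K_family} of $\KK_{m,N}$ gives $I=\widehat{S'}$ for some $S'\in\mathcal{J}_{m+1,N}$ with $S'\cap\bigcup_{k=0}^{m-1}K_{k,N}=\emptyset$; by Lemma~\ref{lem:J_n_N} this forces $I\cap J_{m,N}=\emptyset$, whence $I\notin\mathcal{J}_{m,N}$. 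The stochastic-marginal argument applied to $I$ then yields $|\mu_Q|(I\times\II)\leq 2N\mu_Q^+(\II^2)\lambda(I)$, and we again obtain $|\mu_N|(S\times\II)\leq 2N\mu_Q^+(\II^2)\lambda(S)$.

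Combining the two cases and their horizontal analogues, $\alpha_{Q_N}\leq 2N\mu_Q^+(\II^2)<\infty$, so Theorem~\ref{thm:LCC} provides bivariate copulas $A_N$, $B_N$ and real numbers $\alpha_N$, $\beta_N$ with $Q_N=\alpha_NA_N+\beta_NB_N$. The measure equality $\mu_N=\alpha_N\mu_{A_N}+\beta_N\mu_{B_N}$ is then immediate: both sides are finite signed measures on $\BB(\II^2)$ taking the common value $Q_N(x,y)$ on every rectangle $[0,x]\times[0,y]$, and such rectangles form a $\pi$-system generating $\BB(\II^2)$. The most delicate part of the argument is the dichotomy in Case~2 and the precise unwinding of the three sums in \eqref{eq:muhat_N} to arrive at the bound in terms of the parent component $I$; once that is in place, the rest amounts to bookkeeping with the previously established lemmas.
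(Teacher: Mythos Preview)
Your proof is correct and follows essentially the same route as the paper: verify condition~$(ii)$ of Theorem~\ref{thm:LCC} by bounding the strip masses, splitting into the cases $S\not\subseteq K_N$ and $S\subseteq K_N$, and in the latter case passing to the parent component $I\in\KK_{m,N}$ and using that $I\notin\mathcal{J}_{m,N}$. The only cosmetic differences are that you bound $|\mu_N|$ rather than $\mu_N^+$ (yielding $\alpha_{Q_N}\le 2N\mu_Q^+(\II^2)$ instead of $N\mu_Q^+(\II^2)$), and your justification that $I\cap J_{m,N}=\emptyset$ tacitly uses, beyond Lemma~\ref{lem:J_n_N}, that two members of $\mathcal{P}_m$ are either equal or disjoint (equivalently, Lemma~\ref{lem:disjoint}); both points are harmless.
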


\begin{proof}
Fix $N \ge 1$. By Theorem~\ref{thm:LCC} it suffices to prove that 
\begin{equation*}
        \alpha_{Q_N}=\displaystyle\sup_{n \geq 1} \left\{ \max_{k \in [2^n]} 2^n \sum_{l=1}^{2^n} V_{Q_N}(R_{kl}^{n})^+ \, , \, \max_{l \in [2^n]} 2^n \sum_{k=1}^{2^n} V_{Q_N}(R_{kl}^{n})^+ \right\}<\infty,
\end{equation*}
where $R_{kl}^{n}=[\tfrac{k-1}{2^n},\tfrac{k}{2^n}] \times [\tfrac{l-1}{2^n},\tfrac{l}{2^n}]$ for all $k,l \in [2^n]$ and $x^+=\max\{x,0\}$ for all $x \in \RR$.
Let $n \ge 1$ and $k \in [2^n]$. The continuity of $Q_N$ implies
\begin{equation*}
V_{Q_N}(R_{kl}^{n})^+ = \mu_N(R_{kl}^n)^+=\mu_N\big((\tfrac{k-1}{2^n},\tfrac{k}{2^n}) \times (\tfrac{l-1}{2^n},\tfrac{l}{2^n})\big)^+ \le \mu_N^+\big((\tfrac{k-1}{2^n},\tfrac{k}{2^n}) \times (\tfrac{l-1}{2^n},\tfrac{l}{2^n})\big),
\end{equation*}
therefore
\begin{equation}\label{eq:sum_V_Q_N}
    2^n \sum_{l=1}^{2^n} V_{Q_N}(R_{kl}^{n})^+ \le 2^n \sum_{l=1}^{2^n} \mu_N^+\big((\tfrac{k-1}{2^n},\tfrac{k}{2^n}) \times (\tfrac{l-1}{2^n},\tfrac{l}{2^n})\big)\le 2^n\mu_N^+\big((\tfrac{k-1}{2^n},\tfrac{k}{2^n}) \times \II\big).
\end{equation}
Equation  $\lambda_E \times \varphi_E =\lambda_E \times \varphi_E^+ - \lambda_E \times \varphi_E^-$ implies $(\lambda_E \times \varphi_E)^+ \le \lambda_E \times \varphi_E^+$ for all $E \in \BB(\II)$. Similarly, $(\psi_E \times \lambda_E)^+ \le \psi_E^+ \times \lambda_E$ for all $E \in \BB(\II)$. Hence, by equations \eqref{eq:muhat_N} and \eqref{eq:muover_N},
\begin{align*}
    \mu_N^+ \le \muhat_N^+ + \muover_N^+
    & \le \sum_{i=0}^\infty \lambda_{(a_i^N,b_i^N)} \times \varphi_{(a_i^N,b_i^N)}^+
    +\sum_{j=0}^\infty \psi_{(c_j^N,d_j^N)}^+ \times \lambda_{(c_j^N,d_j^N)}\\
    &\hspace{0.5cm}+\sum_{i=0}^\infty \sum_{j=0}^\infty \mu_Q^+\big((a_i^N,b_i^N)\times(c_j^N,d_j^N)\big) \cdot \lambda_{(a_i^N,b_i^N)} \times \lambda_{(c_j^N,d_j^N)}\\
    &\hspace{0.5cm}+ \muover_N^+,
\end{align*}
where we also used $\mu_Q\big((a_i^N,b_i^N)\times(c_j^N,d_j^N)\big)^+ \le \mu_Q^+\big((a_i^N,b_i^N)\times(c_j^N,d_j^N)\big)$. This implies
\begin{align*}
    \mu_N^+\big((\tfrac{k-1}{2^n},\tfrac{k}{2^n}) \times \II\big)
    & \le \sum_{i=0}^\infty \lambda_{(a_i^N,b_i^N)}((\tfrac{k-1}{2^n},\tfrac{k}{2^n})) \cdot \varphi_{(a_i^N,b_i^N)}^+(\II)
    +\sum_{j=0}^\infty \psi_{(c_j^N,d_j^N)}^+((\tfrac{k-1}{2^n},\tfrac{k}{2^n})) \cdot \lambda_{(c_j^N,d_j^N)}(\II)\\
    &\hspace{0.5cm}+\sum_{i=0}^\infty \sum_{j=0}^\infty \mu_Q^+\big((a_i^N,b_i^N)\times(c_j^N,d_j^N)\big) \cdot \lambda_{(a_i^N,b_i^N)}((\tfrac{k-1}{2^n},\tfrac{k}{2^n})) \cdot \lambda_{(c_j^N,d_j^N)}(\II)\\
    &\hspace{0.5cm}+ \muover_N^+\big((\tfrac{k-1}{2^n},\tfrac{k}{2^n}\big) \times \II).
\end{align*}
From equations \eqref{eq:phi_psi} and \eqref{eq:muover_N} we infer
\begin{align*}
    \varphi_{(a_i^N,b_i^N)}^+(\II) & \le \mu_Q^+\big((a_i^N,b_i^N) \times L_N'\big),\\
    \psi_{(c_j^N,d_j^N)}^+((\tfrac{k-1}{2^n},\tfrac{k}{2^n})) & \le \mu_Q^+\big(\big((\tfrac{k-1}{2^n},\tfrac{k}{2^n}) \cap K_N'\big) \times (c_j^N,d_j^N)\big),\\
    \muover_N^+\big((\tfrac{k-1}{2^n},\tfrac{k}{2^n}) \times \II\big) & \le \mu_Q^+\big(\big((\tfrac{k-1}{2^n},\tfrac{k}{2^n}) \times \II\big) \cap (K_N' \times L_N')\big)
    =\mu_Q^+\big(\big((\tfrac{k-1}{2^n},\tfrac{k}{2^n}) \cap K_N'\big) \times L_N'\big),
\end{align*}
therefore
\begin{equation}\label{eq:est_alpha}
\begin{aligned}
    &\mu_N^+\big((\tfrac{k-1}{2^n},\tfrac{k}{2^n}) \times \II\big)\\
    &\le \sum_{i=0}^\infty \lambda_{(a_i^N,b_i^N)}\big((\tfrac{k-1}{2^n},\tfrac{k}{2^n})\big) \cdot \mu_Q^+\big((a_i^N,b_i^N) \times L_N'\big)
    +\sum_{j=0}^\infty \mu_Q^+\big(\big((\tfrac{k-1}{2^n},\tfrac{k}{2^n}) \cap K_N'\big) \times (c_j^N,d_j^N)\big)\\
    &\hspace{0.5cm}+\sum_{i=0}^\infty \lambda_{(a_i^N,b_i^N)}\big((\tfrac{k-1}{2^n},\tfrac{k}{2^n})\big) \sum_{j=0}^\infty \mu_Q^+\big((a_i^N,b_i^N)\times(c_j^N,d_j^N)\big)
    +\mu_Q^+\big(\big((\tfrac{k-1}{2^n},\tfrac{k}{2^n})  \cap K_N'\big) \times L_N'\big)\\
    &=\sum_{i=0}^\infty \lambda_{(a_i^N,b_i^N)}\big((\tfrac{k-1}{2^n},\tfrac{k}{2^n})\big) \cdot \mu_Q^+\big((a_i^N,b_i^N) \times L_N'\big)
    +\mu_Q^+\big(\big((\tfrac{k-1}{2^n},\tfrac{k}{2^n}) \cap K_N'\big) \times L_N\big)\\
    &\hspace{0.5cm}+\sum_{i=0}^\infty \lambda_{(a_i^N,b_i^N)}\big((\tfrac{k-1}{2^n},\tfrac{k}{2^n})\big) \cdot \mu_Q^+\big((a_i^N,b_i^N) \times L_N\big)
    +\mu_Q^+\big(\big((\tfrac{k-1}{2^n},\tfrac{k}{2^n})  \cap K_N'\big) \times L_N'\big)\\
    &=\sum_{i=0}^\infty \lambda_{(a_i^N,b_i^N)}\big((\tfrac{k-1}{2^n},\tfrac{k}{2^n})\big) \cdot \mu_Q^+\big((a_i^N,b_i^N) \times \II\big)
    +\mu_Q^+\big(\big((\tfrac{k-1}{2^n},\tfrac{k}{2^n}) \cap K_N'\big) \times \II\big)\\
    &=\sum_{i=0}^\infty \frac{\lambda\big((\tfrac{k-1}{2^n},\tfrac{k}{2^n}) \cap (a_i^N,b_i^N)\big)}{\lambda((a_i^N,b_i^N))} \cdot \mu_Q^+\big((a_i^N,b_i^N) \times \II\big)
    +\mu_Q^+\big(\big((\tfrac{k-1}{2^n},\tfrac{k}{2^n}) \cap K_N'\big) \times \II\big).
\end{aligned}
\end{equation}
We now consider two cases.

\medskip{\bf Case 1:} $(\tfrac{k-1}{2^n},\tfrac{k}{2^n}) \subseteq K_N$.
Note that $\tfrac{2k-1}{2^{n+1}} \in (\tfrac{k-1}{2^n},\tfrac{k}{2^n})$ but $\tfrac{2k-1}{2^{n+1}} \notin \bigcup_{m=n+1}^\infty K_{m,N}$ because $\KK_{m,N} \subseteq \mathcal{P}_m$.
Hence, $\tfrac{2k-1}{2^{n+1}} \in \bigcup_{m=0}^n K_{m,N}$ by equation \eqref{eq:K_N}.
This implies that there exists $m$ with $0\le m\le n$ and $A \in \KK_{m,N}$  such that $(\tfrac{k-1}{2^n},\tfrac{k}{2^n})$ intersects $A$. But then $(\tfrac{k-1}{2^n},\tfrac{k}{2^n}) \subseteq A$ because $A \in \mathcal{P}_m$ and $m\le n$.
By equation \eqref{eq:inter} we may assume without loss of generality that $A=(a_0^N,b_0^N)$.
Hence, $(\tfrac{k-1}{2^n},\tfrac{k}{2^n})$ does not intersect any $(a_i^N,b_i^N)$ with $i \ge 1$.
It now follows from inequality \eqref{eq:est_alpha} and from $(\tfrac{k-1}{2^n},\tfrac{k}{2^n}) \cap K_N'=\emptyset$ that
\begin{equation*}
    \mu_N^+\big((\tfrac{k-1}{2^n},\tfrac{k}{2^n}) \times \II\big)
    \le \frac{\lambda((\tfrac{k-1}{2^n},\tfrac{k}{2^n}))}{\lambda((a_0^N,b_0^N))} \cdot \mu_Q^+\big((a_0^N,b_0^N) \times \II\big),
\end{equation*}
or equivalently
\begin{equation*}
    2^n \mu_N^+\big((\tfrac{k-1}{2^n},\tfrac{k}{2^n}) \times \II\big)
    \le \frac{\mu_Q^+(A \times \II)}{\lambda(A)}.
\end{equation*}
Since $A \in \KK_{m,N}$ with $m \le n$ and the set $K_{m,N}$ is disjoint from $\bigcup_{l=0}^{m-1} K_{l,N}$, Lemma~\ref{lem:J_n_N} and equation \eqref{eq:J_family} imply that $A \notin \mathcal{J}_{m,N}$. By condition \eqref{eq:J_cond} we have $\tfrac{\mu_Q^+(A \times \II)}{\lambda(A)} \le N \cdot \mu_Q^+(\II^2)$, hence
\begin{equation}\label{eq:case_1}
2^n \mu_N^+\big((\tfrac{k-1}{2^n},\tfrac{k}{2^n}) \times \II\big) \le N \cdot \mu_Q^+(\II^2).
\end{equation}

\medskip{\bf Case 2:} $(\tfrac{k-1}{2^n},\tfrac{k}{2^n}) \nsubseteq K_N$.
Note that each $(a_i^N,b_i^N)$ is a member of $\bigcup_{n=0}^\infty \mathcal{P}_n$. Hence, we have three options for each $(a_i^N,b_i^N)$, namely, $(i)$ the interval $(\tfrac{k-1}{2^n},\tfrac{k}{2^n})$ does not intersect $(a_i^N,b_i^N)$, $(ii)$ the interval $(\tfrac{k-1}{2^n},\tfrac{k}{2^n})$ is contained in $(a_i^N,b_i^N)$, or $(iii)$ the interval $(\tfrac{k-1}{2^n},\tfrac{k}{2^n})$ contains $(a_i^N,b_i^N)$. By the case assumption and equation \eqref{eq:inter}, option $(ii)$ cannot happen, so the interval $(\tfrac{k-1}{2^n},\tfrac{k}{2^n})$ contains all $(a_i^N,b_i^N)$ that it intersects. In particular, 
\begin{equation*}
\bigcup_{(a_i^N,b_i^N) \subseteq (\frac{k-1}{2^n},\frac{k}{2^n})} (a_i^N,b_i^N) =(\tfrac{k-1}{2^n},\tfrac{k}{2^n}) \cap K_N.
\end{equation*}
Thus, inequality \eqref{eq:est_alpha} implies
\begin{equation}\label{eq:compare}
\begin{aligned}
    \mu_N^+\big((\tfrac{k-1}{2^n},\tfrac{k}{2^n}) \times \II\big)
    &\le \sum_{(a_i^N,b_i^N) \subseteq (\frac{k-1}{2^n},\frac{k}{2^n})} \mu_Q^+\big((a_i^N,b_i^N) \times \II\big)
    +\mu_Q^+\big(\big((\tfrac{k-1}{2^n},\tfrac{k}{2^n}) \cap K_N'\big) \times \II\big)\\
    &=\mu_Q^+\big(\big((\tfrac{k-1}{2^n},\tfrac{k}{2^n}) \cap K_N\big) \times \II\big)
    +\mu_Q^+\big(\big((\tfrac{k-1}{2^n},\tfrac{k}{2^n}) \cap K_N'\big) \times \II\big)\\
    &=\mu_Q^+\big((\tfrac{k-1}{2^n},\tfrac{k}{2^n}) \times \II\big).
\end{aligned}
\end{equation}
Furthermore, $(\tfrac{k-1}{2^n},\tfrac{k}{2^n}) \notin \mathcal{J}_{n,N}$, otherwise Lemma~\ref{lem:J_n_N} and equation \eqref{eq:K_N} would imply that $(\tfrac{k-1}{2^n},\tfrac{k}{2^n}) \subseteq J_{n,N} \subseteq K_N$. By condition \eqref{eq:J_cond} this implies
\begin{equation*}
2^n \mu_Q^+\big((\tfrac{k-1}{2^n},\tfrac{k}{2^n}) \times \II\big)=\frac{\mu_Q^+\big((\tfrac{k-1}{2^n},\tfrac{k}{2^n}) \times \II\big)}{\lambda\big((\tfrac{k-1}{2^n},\tfrac{k}{2^n})\big)} \le N \cdot \mu_Q^+(\II^2).
\end{equation*}
This, together with inequality \eqref{eq:compare}, gives
\begin{equation}\label{eq:case_2}
2^n \mu_N^+\big((\tfrac{k-1}{2^n},\tfrac{k}{2^n}) \times \II\big) \le N \cdot \mu_Q^+(\II^2).
\end{equation}
Combining inequalities \eqref{eq:case_1} and \eqref{eq:case_2} with inequality \eqref{eq:sum_V_Q_N} gives
\begin{equation*}
    2^n \sum_{l=1}^{2^n} V_{Q_N}(R_{kl}^{n})^+ \le N \cdot \mu_Q^+(\II^2).
\end{equation*}
Since $n$ and $k$ were arbitrary, we infer
\begin{equation*}
    \displaystyle\sup_{n \geq 1} \left\{ \max_{k \in [2^n]} 2^n \sum_{l=1}^{2^n} V_{Q_N}(R_{kl}^{n})^+ \right\} \le N \cdot \mu_Q^+(\II^2).
\end{equation*}
By symmetry we also obtain
\begin{equation*}
    \displaystyle\sup_{n \geq 1} \left\{ \max_{l \in [2^n]} 2^n \sum_{k=1}^{2^n} V_{Q_N}(R_{kl}^{n})^+ \right\} \le N \cdot \mu_Q^+(\II^2).
\end{equation*}
Consequently, $\alpha_{Q_N} \le N \cdot \mu_Q^+(\II^2)<\infty$, as required.
\end{proof}

Let us summarize the results of this section that we will need for the proof of Theorem~\ref{thm:main}.

\begin{theorem}\label{thm:main_sequence}
    Let $Q$ be a bivariate quasi-copula that induces a signed measure $\mu_Q$ on $\BB(\II^2)$. Then there exists a sequence of bivariate quasi-copulas $Q_N$, $N \ge 1$, such that
    \begin{enumerate}[$(i)$]
    \item $Q_N=\alpha_N A_N+\beta_N B_N$ for some bivariate copulas $A_N$ and $B_N$ and some real numbers $\alpha_N$ and $\beta_N$, and
    \item the sequence of induced measures $\mu_N=\alpha_N \mu_{A_N}+\beta_N \mu_{B_N}$ converges to $\mu_Q$ in the total variation norm.
    \end{enumerate}
\end{theorem}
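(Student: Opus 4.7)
The statement is a summary theorem collecting the results that have been proved throughout Section~\ref{sec:measure}, so the plan is essentially to assemble the relevant lemmas rather than do new work. For each $N \ge 1$, I take $Q_N$ to be the quasi-copula constructed in Subsection~\ref{subs:quasi-copulas} as the canonical extension (via the Queseda-Molina/Sempi formula \eqref{eq:Q_N}) of the sub-quasi-copula $Q|_{K_N' \times L_N'}$, where the sets $K_N$ and $L_N$ are those built in Subsection~\ref{subs:sets} from the Jordan decomposition of $\mu_Q$.

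For part $(i)$, I invoke Lemma~\ref{lem:combination_of_copulas}, which gives the existence of copulas $A_N$, $B_N$ and scalars $\alpha_N$, $\beta_N$ with $Q_N = \alpha_N A_N + \beta_N B_N$. This lemma is where Theorem~\ref{thm:LCC} is applied; the verification that $\alpha_{Q_N}<\infty$ rests on the careful construction of the sets $K_N$, $L_N$ via the thresholds in \eqref{eq:J_cond} and the disjointness/refinement bookkeeping of Lemmas~\ref{lem:J_n_N} and~\ref{lem:disjoint}.

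For part $(ii)$, I first note that by Lemma~\ref{lem:measure_inducing}, each $Q_N$ induces the signed measure $\mu_N=\muhat_N+\muover_N$ defined in \eqref{eq:muhat_N} and \eqref{eq:muover_N}. Combined with the linear decomposition from part $(i)$ and the uniqueness of the measure induced by a bivariate quasi-copula (which follows because Borel sets in $\II^2$ are generated by rectangles $[0,x]\times[0,y]$), one obtains $\mu_N = \alpha_N \mu_{A_N} + \beta_N \mu_{B_N}$. The convergence $\mu_N \to \mu_Q$ in total variation is then exactly Lemma~\ref{lem:TV_convergence}, where the two summands $\muhat_N$ and $\muover_N - \mu_Q$ are each controlled by $|\mu_Q|\big((K_N\times \II) \cup (\II\times L_N)\big)$, which vanishes as $N \to \infty$ by Lemmas~\ref{lem:abs_mu_Q} and~\ref{lem:abs_mu_Q_2}.

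Since every ingredient is already proved, there is no real obstacle left at this stage. If I were to write this from scratch without the subsections, the hard part would be the design of the sets $K_N$, $L_N$: they must be small enough in Lebesgue measure to later force uniform convergence of $Q_N$ to $Q$ (via the $1$-Lipschitz property on the strips where the extension is affine), yet their complements must still trap the problematic mass so that $\alpha_{Q_N}$ stays finite. But for the proof as presented here, the task reduces to citing Lemmas~\ref{lem:combination_of_copulas}, \ref{lem:measure_inducing}, and~\ref{lem:TV_convergence} in sequence.
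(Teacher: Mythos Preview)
Your proposal is correct and matches the paper's approach exactly: Theorem~\ref{thm:main_sequence} is stated in the paper as a summary of Section~\ref{sec:measure} with no separate proof, so assembling Lemmas~\ref{lem:combination_of_copulas}, \ref{lem:measure_inducing}, and \ref{lem:TV_convergence} is precisely what is required. Your observation that the identity $\mu_N=\alpha_N\mu_{A_N}+\beta_N\mu_{B_N}$ follows from uniqueness of the induced measure is the same reasoning the paper uses implicitly in the second sentence of Lemma~\ref{lem:combination_of_copulas}.
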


\section{Proof of the main theorem}\label{sec:proof}

In this section we continue assuming that $Q$ is a bivariate quasi-copula that induces a signed measure $\mu_Q$. By Theorem~\ref{thm:main_sequence} we obtain a sequence of measures $\mu_N$, induced by quasi-copulas $Q_N$, that converges to $\mu_Q$ in the total variation norm.
We will now convert the sequence of measures $\mu_N$ into a series, which will be manipulated to produce a converging series of multiples of measures induced by copulas.
We will employ a method that was used in \cite{DolKuzSto24} for converting a sequence of quasi-copulas into a function series converging in the supremum norm. We just need to apply the method to a sequence of measures converging in the total variation norm.

First we construct the series
\begin{equation}\label{eq:ser_Q}
    \mu_Q=\mu_1+\sum_{N=2}^\infty (\mu_{N}-\mu_{N-1}).
\end{equation}
Partial sums of this series are measures $\mu_N$, so by Theorem~\ref{thm:main_sequence} the series converges in the total variation norm and its sum is $\mu_Q$. Theorem~\ref{thm:main_sequence} also implies that $Q_N=\alpha_N A_N+\beta_N B_N$ for some bivariate copulas $A_N$ and $B_N$ and some real numbers $\alpha_N$ and $\beta_N$.
Note that $\alpha_N+\beta_N=Q_N(1,1)=1$. If both $\alpha_N$ and $\beta_N$ are positive, $Q_N$ is a convex combination of copulas, so it is a copula itself and we may assume $\alpha_N=1$, $\beta_N=0$, and $A_N=Q_N$. If at least one of $\alpha_N$ and $\beta_N$ is negative, we may assume $\beta_N<0$, and consequently $\alpha_N>1$.
So we have $\alpha_N \ge 1$ and $\beta_N \le 0$ for all $N \ge 1$.
Furthermore,
\begin{align*}
    \mu_{N}-\mu_{N-1} &=\alpha_{N} \mu_{A_{N}}+\beta_{N} \mu_{B_{N}}-\alpha_{N-1} \mu_{A_{N-1}}-\beta_{N-1} \mu_{B_{N-1}}\\
    &=(\alpha_{N}-\beta_{N-1}) \cdot \frac{\alpha_{N} \mu_{A_{N}}-\beta_{N-1} \mu_{B_{N-1}}}{\alpha_{N}-\beta_{N-1}}
    -(\alpha_{N-1}-\beta_{N}) \cdot \frac{\alpha_{N-1} \mu_{A_{N-1}}-\beta_{N} \mu_{B_{N}}}{\alpha_{N-1}-\beta_{N}}
\end{align*}
for all $N \ge 2$, where $\alpha_{N}-\beta_{N-1} \ge 1$ and $\alpha_{N-1}-\beta_{N} \ge 1$. Since $\alpha_N \ge 1$ and $\beta_N \le 0$ for all $N \ge 1$, the functions
\begin{equation}\label{eq:cop_DE}
    D_N=\frac{\alpha_{N} A_{N} - \beta_{N-1} B_{N-1}}{\alpha_{N}-\beta_{N-1}} \qquad\text{and}\qquad E_N=\frac{\alpha_{N-1} A_{N-1} - \beta_{N} B_{N}}{\alpha_{N-1}-\beta_{N}}
\end{equation}
for all $N \ge 2$ are convex combinations of copulas, so they are copulas themselves. Denote also
\begin{equation}\label{eq:num_zx}
    \zeta_N=\alpha_{N}-\beta_{N-1} \ge 1 \qquad\text{and}\qquad \xi_N=-(\alpha_{N-1}-\beta_{N}) \le -1
\end{equation}
for all $N \ge 2$ so that
\begin{align*}
    \mu_{N+1}-\mu_{N} = \zeta_N \mu_{D_N} + \xi_N \mu_{E_N}.
\end{align*}
In addition, let
\begin{equation}\label{eq:cop_num}
    D_1=A_1, \quad E_1=B_1, \quad \zeta_1=\alpha_1, \quad\text{and}\quad \xi_1=\beta_1.
\end{equation}
Then the series in equation \eqref{eq:ser_Q} is expressed as
\begin{equation}\label{eq:ser_DE}
    \mu_Q=\sum_{N=1}^\infty (\zeta_N \mu_{D_N} + \xi_N \mu_{E_N}).
\end{equation}
If we omit the parenthesis in series \eqref{eq:ser_DE}, the resulting series
\begin{equation*}
    \zeta_1 \mu_{D_1} + \xi_1 \mu_{E_1} + \zeta_2 \mu_{D_2} + \xi_2 \mu_{E_2} + \zeta_3 \mu_{D_3} + \xi_3 \mu_{E_3} + \ldots
\end{equation*}
is not convergent in the total variation norm. For example, evaluating it on the set $\II^2$ we obtain the series $\zeta_1 + \xi_1 + \zeta_2 + \xi_2 + \zeta_3 + \xi_3 + \ldots$, which is divergent, in fact, oscillating, since $\zeta_N \ge 1$ and $\xi_N \le -1$.
Before we omit the parenthesis in series \eqref{eq:ser_DE}, we need to split its terms into sums of terms with small enough norm, so that after omitting the parenthesis the "oscillation" will tend to $0$.
For every $N \ge 1$ we choose a positive integer $M_N > |\xi_N|$. We rewrite the series in equation \eqref{eq:ser_DE} as
\begin{equation}\label{eq:ser_DE_split}
    \mu_Q=\sum_{N=1}^\infty \sum_{i=1}^{N M_N} \big(\tfrac{\zeta_N}{N M_N} \mu_{D_N} + \tfrac{\xi_N}{N M_N} \mu_{E_N}\big)
\end{equation}
and remove the parenthesis to obtain the series
\begin{equation}\label{eq:ser_final}
    \begin{aligned}
        &\phantom{+}\underbrace{\tfrac{\zeta_1}{1 M_1} \mu_{D_1} + \tfrac{\xi_1}{1 M_1} \mu_{E_1}
        +\tfrac{\zeta_1}{1 M_1} \mu_{D_1} + \tfrac{\xi_1}{1 M_1} \mu_{E_1}
        +\ldots+\tfrac{\zeta_1}{1 M_1} \mu_{D_1} + \tfrac{\xi_1}{1 M_1} \mu_{E_1}}_{2M_1 \text{ terms}}\\
        &+\underbrace{\tfrac{\zeta_2}{2 M_2} \mu_{D_2} + \tfrac{\xi_2}{2 M_2} \mu_{E_2}
        +\tfrac{\zeta_2}{2 M_2} \mu_{D_2} + \tfrac{\xi_2}{2 M_2} \mu_{E_2}
        +\ldots+\tfrac{\zeta_2}{2 M_2} \mu_{D_2} + \tfrac{\xi_2}{2 M_2} \mu_{E_2}}_{4M_2 \text{ terms}}\\
        &+\underbrace{\tfrac{\zeta_3}{3 M_3} \mu_{D_3} + \tfrac{\xi_3}{3 M_3} \mu_{E_3}
        +\tfrac{\zeta_3}{3 M_3} \mu_{D_3} + \tfrac{\xi_3}{3 M_3} \mu_{E_3}
        +\ldots+\tfrac{\zeta_3}{3 M_3} \mu_{D_3} + \tfrac{\xi_3}{3 M_3} \mu_{E_3}}_{6M_3 \text{ terms}}\\
        &+\ldots
    \end{aligned}
\end{equation}

We now prove the following.

\begin{lemma}\label{lem:series}
    The series of finite signed measures \eqref{eq:ser_final} converges to $\mu_Q$ in the total variation norm.
\end{lemma}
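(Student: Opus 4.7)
The plan is to compare partial sums of the series \eqref{eq:ser_final} with the sequence $\mu_N$, which already converges to $\mu_Q$ in the total variation norm by Theorem~\ref{thm:main_sequence}. A partial sum of \eqref{eq:ser_final} ending exactly at a block boundary is nothing but $\mu_N$, since collapsing the $2NM_N$ terms of block $N$ recovers $\zeta_N \mu_{D_N} + \xi_N \mu_{E_N} = \mu_N - \mu_{N-1}$. The task therefore reduces to bounding the total variation of partial sums ending in the middle of a block.

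The key observation is the identity $\zeta_N = |\xi_N|$. Indeed, $\alpha_N + \beta_N = Q_N(1,1) = 1$ together with $\alpha_N \ge 1$ and $\beta_N \le 0$ gives $\alpha_N = 1 - \beta_N$, so
\[ \zeta_N = \alpha_N - \beta_{N-1} = 1 - \beta_N - \beta_{N-1} = \alpha_{N-1} - \beta_N = |\xi_N|. \]
Combined with the choice $M_N > |\xi_N|$, this yields $\zeta_N/(N M_N) < 1/N$ and $|\xi_N|/(N M_N) < 1/N$, so every individual term of block $N$ in \eqref{eq:ser_final} has total variation norm smaller than $1/N$, because $\mu_{D_N}$ and $\mu_{E_N}$ are probability measures (as $D_N$, $E_N$ are copulas).

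Next, let $T_{N,k}$ denote the sum of the first $k$ of the $2NM_N$ terms of block $N$. The alternating structure of the block implies that $T_{N,2j} = (j/(NM_N))(\mu_N - \mu_{N-1})$ for even $k = 2j$, while for odd $k$ at most one extra term of norm smaller than $1/N$ is appended. In either case,
\[ \|T_{N,k}\|_{TV} \le \|\mu_N - \mu_{N-1}\|_{TV} + \tfrac{1}{N}. \]
Since the sequence $\mu_N$ converges in total variation, the increments $\|\mu_N - \mu_{N-1}\|_{TV}$ tend to $0$.

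Finally, any partial sum $S_n$ of \eqref{eq:ser_final} ending after the first $k$ terms of block $N$ has the form $S_n = \mu_{N-1} + T_{N,k}$, hence
\[ \|S_n - \mu_Q\|_{TV} \le \|\mu_{N-1} - \mu_Q\|_{TV} + \|\mu_N - \mu_{N-1}\|_{TV} + \tfrac{1}{N}, \]
and each of the three summands tends to $0$ as $N \to \infty$. The main conceptual obstacle is spotting the identity $\zeta_N = |\xi_N|$, which is precisely what makes the single condition $M_N > |\xi_N|$ sufficient to control both $\mu_{D_N}$- and $\mu_{E_N}$-type terms uniformly; once this is noticed, the rest is a routine bookkeeping argument.
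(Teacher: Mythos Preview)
Your proof is correct and follows essentially the same approach as the paper: both write the discrepancy between $\mu_Q$ and an arbitrary partial sum as a fractional multiple of the block increment $\zeta_m\mu_{D_m}+\xi_m\mu_{E_m}=\mu_m-\mu_{m-1}$, plus one leftover single term controlled by $M_m>|\xi_m|$, plus a tail that vanishes because $\mu_N\to\mu_Q$. The only cosmetic difference is that the paper bounds the tail $\mu_Q-S_n$ rather than the head, so its leftover term is an $E$-term and the hypothesis $M_m>|\xi_m|$ applies directly; your formulation leaves a $D$-term instead, which is why you need the identity $\zeta_N=|\xi_N|$ (valid for $N\ge2$; for $N=1$ it fails but that block is finite and irrelevant to the limit)---so this identity is not quite the ``main conceptual obstacle'' but rather an artifact of your bookkeeping choice.
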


\begin{proof}
The difference between $\mu_Q$ and any partial sum of series \eqref{eq:ser_final}, is of the form
\begin{align*}
    \Delta &=\delta \tfrac{\xi_m}{m M_m} \mu_{E_m} +\sum_{i=k+1}^{m M_m} \big(\tfrac{\zeta_m}{m M_m} \mu_{D_m} + \tfrac{\xi_m}{m M_m} \mu_{E_m}\big)+\sum_{N=m+1}^\infty \sum_{i=1}^{N M_N} \big(\tfrac{\zeta_N}{N M_N} \mu_{D_N} + \tfrac{\xi_N}{N M_N} \mu_{E_N}\big)\\
    &=\delta \tfrac{\xi_m}{m M_m} \mu_{E_m} +\tfrac{m M_m-k}{m M_m} (\zeta_m \mu_{D_m} + \xi_m \mu_{E_m})+\sum_{N=m+1}^\infty (\zeta_N \mu_{D_N} + \xi_N \mu_{E_N})
\end{align*}
for some $m \ge 1$, $k \in [m M_m]$, and $\delta \in \{0,1\}$. We can estimate its total variation norm as follows
\begin{align*}
    \|\Delta\|_{TV} &\le \delta \tfrac{|\xi_m|}{m M_m} \cdot \big\|\mu_{E_m}\big\|_{TV} +\tfrac{m M_m-k}{m M_m} \cdot \big\|\zeta_m \mu_{D_m} + \xi_m \mu_{E_m}\big\|_{TV}\\
    &\hspace{0.5cm}+\Big\|\sum_{N=m+1}^\infty (\zeta_N \mu_{D_N} + \xi_N \mu_{E_N})\Big\|_{TV}.
\end{align*}
Using $M_m > |\xi_m|$ and the fact that $\mu_{E_m}$ is a probability measure, we obtain
\begin{equation*}
    \|\Delta\|_{TV} \le \tfrac{\delta}{m} +\big\|\zeta_m \mu_{D_m} + \xi_m \mu_{E_m}\big\|_{TV}+\Big\|\sum_{N=m+1}^\infty (\zeta_N \mu_{D_N} + \xi_N \mu_{E_N})\Big\|_{TV}.
\end{equation*}
When $m$ tends to infinity, the first term converges to $0$, the second term converges to $0$ because it is the norm of a single term of a converging series \eqref{eq:ser_DE}, and the last term converges to $0$ because it is the norm of the tail of a converging series \eqref{eq:ser_DE}.
This proves that the partial sums of series \eqref{eq:ser_final} converge to $\mu_Q$ in the total variation norm.
\end{proof}

We are now finally ready to prove Theorem~\ref{thm:main}.

\begin{proof}[Proof of Theorem~\ref{thm:main}]
\phantom{1}

$(i) \Longrightarrow (ii)$: Assume that a quasi-copula $Q$ induces a signed measure $\mu_Q$ on $\BB(\II^2)$. Let $C_1,C_2,C_3,\ldots$ be the sequence of copulas
\begin{equation*}
    \underbrace{D_1, E_1, D_1, \ldots, D_1, E_1}_{2M_1 \text{ terms}}
    ,\underbrace{D_2, E_2, D_2, \ldots, D_2, E_2}_{4M_2 \text{ terms}}
    ,\underbrace{D_3, E_3, D_3, \ldots, D_3, E_3}_{6M_3 \text{ terms}}
    ,\ldots
\end{equation*}
defined in equations \eqref{eq:cop_DE} and \eqref{eq:cop_num}, and let $\gamma_1,\gamma_2,\gamma_3,\ldots$ be the sequence of real numbers
\begin{equation*}
    \underbrace{\tfrac{\zeta_1}{1 M_1}, \tfrac{\xi_1}{1 M_1}, \tfrac{\zeta_1}{1 M_1}, \ldots, \tfrac{\zeta_1}{1 M_1}, \tfrac{\xi_1}{1 M_1} }_{2M_1 \text{ terms}}
    ,\underbrace{\tfrac{\zeta_2}{2 M_2}, \tfrac{\xi_2}{2 M_2}, \tfrac{\zeta_2}{2 M_2}, \ldots, \tfrac{\zeta_2}{2 M_2}, \tfrac{\xi_2}{2 M_2}}_{4M_2 \text{ terms}}
    ,\underbrace{\tfrac{\zeta_3}{3 M_3}, \tfrac{\xi_3}{3 M_3}, \tfrac{\zeta_3}{3 M_3}, \ldots, \tfrac{\zeta_3}{3 M_3}, \tfrac{\xi_3}{3 M_3}}_{6M_3 \text{ terms}}
    ,\ldots
\end{equation*}
defined in equations \eqref{eq:num_zx} and \eqref{eq:cop_num}.
By Lemma~\ref{lem:series} the series $\sum_{n=1}^\infty \gamma_n \mu_{C_n}$ converges in the total variation norm to $\mu_Q$, which proves condition $(b)$.
For all $k \geq 1$ and $(x,y) \in \II^2$ we have
\begin{align*}
    \Big|Q(x,y)-\sum_{n=1}^k \gamma_n C_n(x,y)\Big| &= \Big|\Big(\mu_Q - \sum_{n=1}^k \gamma_n \mu_{C_n}\Big)\big([0,x] \times [0,y]\big)\Big|\\
    &\le \Big|\mu_Q - \sum_{n=1}^k \gamma_n \mu_{C_n}\Big|\big([0,x] \times [0,y]\big)\\
    &\le \Big|\mu_Q - \sum_{n=1}^k \gamma_n \mu_{C_n}\Big|(\II^2)
    =\Big\| \mu_Q - \sum_{n=1}^k \gamma_n \mu_{C_n} \Big\|_{TV},
\end{align*}
Hence, the convergence of the series $\sum_{n=1}^\infty \gamma_n \mu_{C_n}$ in the total variation norm implies that the series $\sum_{n=1}^\infty \gamma_n C_n$ converges uniformly to $Q$. This proves condition $(a)$.

$(ii) \Longrightarrow (i)$: By condition $(b)$ the sum of the series $\sum_{n=1}^\infty \gamma_n \mu_{C_n}$ is a finite signed measure on $\BB(\II^2)$. Denote this measure by $\mu$.
As above, this implies that the series $\sum_{n=1}^\infty \gamma_n C_n$ converges uniformly to the function $(x,y) \mapsto \mu\big([0,x] \times [0,y]\big)$. On the other hand, this series converges to $Q$ by condition $(a)$.
Hence, $Q(x,y)=\mu\big([0,x] \times [0,y]\big)$ for all $(x,y) \in \II^2$. 
\end{proof}

We note that if condition $(i)$ of Theorem~\ref{thm:main} holds, then the series in condition $(ii)(b)$ converges to $\mu_Q$. This implies that a measure induced by a bivariate quasi-copula can always be expressed as a converging sum of multiples of measures induced by bivariate copulas, i.e., as what we can call an \emph{infinite linear combination} of measures induced by copulas.

\begin{corollary}\label{cor:measure}
    Any measure $\mu$ induced by some measure-inducing bivariate quasi-copula can be expressed as
    $\mu=\sum_{n=1}^\infty \gamma_n \mu_n,$
    where each $\mu_n$ is a measure induced by some bivariate copula, each $\gamma_n$ is a real number, and the series converges in the total variation norm.
\end{corollary}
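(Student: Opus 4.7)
The plan is to deduce the corollary directly from Theorem~\ref{thm:main}. Let $\mu = \mu_Q$ be the signed measure induced by a measure-inducing bivariate quasi-copula $Q$. I would first invoke the implication $(i) \Longrightarrow (ii)$ of Theorem~\ref{thm:main} to obtain a sequence of bivariate copulas $C_n$ and real numbers $\gamma_n$ for which $\sum_{n=1}^\infty \gamma_n C_n$ converges uniformly to $Q$ and $\sum_{n=1}^\infty \gamma_n \mu_{C_n}$ converges in the total variation norm to some finite signed measure $\nu$ on $\BB(\II^2)$.

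The key remaining step is to identify $\nu$ with $\mu_Q$. Since convergence in the total variation norm implies convergence on every individual Borel set, evaluating on a rectangle $[0,x] \times [0,y]$ gives
\begin{equation*}
\nu([0,x] \times [0,y]) = \lim_{k \to \infty} \sum_{n=1}^k \gamma_n \mu_{C_n}([0,x] \times [0,y]) = \lim_{k \to \infty} \sum_{n=1}^k \gamma_n C_n(x,y) = Q(x,y) = \mu_Q([0,x] \times [0,y]),
\end{equation*}
where the second equality uses the definition of the measure induced by a copula and the third equality uses the uniform, hence pointwise, convergence granted by condition $(ii)(a)$.

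Thus the finite signed measures $\nu$ and $\mu_Q$ coincide on the $\pi$-system $\{[0,x] \times [0,y] \mid (x,y) \in \II^2\}$, which generates $\BB(\II^2)$. Applying Dynkin's $\pi$-$\lambda$ theorem to the finite signed measure $\nu - \mu_Q$, noting that the collection of Borel sets on which it vanishes forms a $\lambda$-system by countable additivity, yields $\nu = \mu_Q$ on $\BB(\II^2)$. Setting $\mu_n = \mu_{C_n}$ then completes the proof. There is no genuine obstacle beyond this routine identification of the limit measure, since all the difficult analytic work has already been carried out in Theorem~\ref{thm:main}; in fact, the remark immediately preceding the corollary already records this identification explicitly.
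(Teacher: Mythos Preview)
Your proposal is correct and follows essentially the same approach as the paper: the corollary is deduced directly from Theorem~\ref{thm:main}, and the only additional point is the identification of the limiting measure with $\mu_Q$, which the paper records in the remark immediately preceding the corollary (relying on Lemma~\ref{lem:series}, where the constructed series is shown to converge to $\mu_Q$ in total variation). Your $\pi$-$\lambda$ verification of $\nu=\mu_Q$ from the statement of Theorem~\ref{thm:main} alone is a clean alternative to invoking that construction, but the overall route is the same.
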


As another corollary to Theorem~\ref{thm:main} we obtain the following interesting property.

\begin{theorem}\label{thm:ac}
    Let $Q$ be a bivariate quasi-copula that induces a signed measure $\mu_Q$ on $\BB(\II^2)$. Then
    \begin{equation}\label{eq:H}
        H(x,y)=\frac{|\mu_Q|\big([0,x] \times [0,y]\big)}{|\mu_Q|(\II^2)}
    \end{equation}
    is a joint distribution function of two absolutely continuous random variables $X$ and $Y$ with ranges in $\II^2$.
\end{theorem}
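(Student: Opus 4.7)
The plan is to identify $H$ with the distribution function of the probability measure $|\mu_Q|/|\mu_Q|(\II^2)$ on $\BB(\II^2)$. This immediately gives that $H$ is non-negative, $2$-increasing, grounded ($H(x,0)=H(0,y)=0$), and normalized ($H(1,1)=1$). After extending $H$ to $\RR^2$ by the usual conventions and verifying right-continuity, we obtain a joint distribution function. The real work is to prove that the two marginal functions $H(\cdot,1)$ and $H(1,\cdot)$ are absolutely continuous on $\II$, which ensures both that $X$ and $Y$ are absolutely continuous random variables and, as a by-product, that $H$ itself is continuous on $\II^2$.

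The main idea for the marginal absolute continuity is to combine Theorem~\ref{thm:main_sequence} with the fact that $\mu\mapsto|\mu|$ is $1$-Lipschitz in the total variation norm. From the partition characterisation of $|\cdot|$ one obtains
\[
\bigl||\mu|(B)-|\nu|(B)\bigr|\le|\mu-\nu|(B)\le\|\mu-\nu\|_{TV}
\]
for all Borel sets $B$ and all finite signed measures $\mu,\nu$. Applied to the approximating sequence $\mu_N\to\mu_Q$ from Theorem~\ref{thm:main_sequence}, this yields $|\mu_N|(B)\to|\mu_Q|(B)$ uniformly in $B$.

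For each $N$, the standard inequality $|\mu_N|=|\alpha_N\mu_{A_N}+\beta_N\mu_{B_N}|\le\alpha_N\mu_{A_N}+|\beta_N|\mu_{B_N}$, combined with the stochasticity of the copula measures $\mu_{A_N}$ and $\mu_{B_N}$, yields
\[
|\mu_N|(A\times\II)\le(\alpha_N+|\beta_N|)\,\lambda(A)\qquad\text{for all }A\in\BB(\II).
\]
Whenever $\lambda(A)=0$, this forces $|\mu_N|(A\times\II)=0$ for every $N$, and passing to the limit gives $|\mu_Q|(A\times\II)=0$. Hence $|\mu_Q|(\cdot\times\II)\ll\lambda$, and by symmetry $|\mu_Q|(\II\times\cdot)\ll\lambda$; in particular $|\mu_Q|$ assigns zero mass to every vertical or horizontal segment, so $H$ is continuous on $\II^2$. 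I do not foresee a serious obstacle here, since the main analytic difficulty — the construction of a total-variation-norm-converging sequence of linear combinations of copula measures — has already been accomplished in Theorem~\ref{thm:main_sequence}, and what remains are standard estimates based on the Jordan decomposition and the stochasticity of copula marginals.
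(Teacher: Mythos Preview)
Your argument is correct and takes a genuinely different route from the paper's proof. The paper invokes the full series representation of Theorem~\ref{thm:main}, writes $|\mu_Q|(A\times\II)=\mu_Q\big((A\times\II)\cap P^+\big)-\mu_Q\big((A\times\II)\cap P^-\big)$ via the Hahn decomposition, expands each of these two terms as the series $\sum_n\gamma_n\mu_{C_n}(\,\cdot\,)$, and then bounds every summand by $\mu_{C_n}(A\times\II)=\lambda(A)=0$. You instead stop at the intermediate Theorem~\ref{thm:main_sequence}, use the $1$-Lipschitz estimate $\bigl||\mu_N|(B)-|\mu_Q|(B)\bigr|\le\|\mu_N-\mu_Q\|_{TV}$ to pass from $|\mu_N|$ to $|\mu_Q|$ setwise, and handle each $|\mu_N|$ by the trivial bound $|\mu_N|\le|\alpha_N|\,\mu_{A_N}+|\beta_N|\,\mu_{B_N}$. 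This is arguably cleaner: it avoids both the series manipulations of Section~\ref{sec:proof} and the explicit Hahn sets, and it needs only the sequence of two-copula combinations, not the final infinite-combination representation. One small correction: Theorem~\ref{thm:main_sequence} does not specify the sign of $\alpha_N$, so your displayed inequality should read $|\alpha_N|\,\mu_{A_N}+|\beta_N|\,\mu_{B_N}$ rather than $\alpha_N\,\mu_{A_N}+|\beta_N|\,\mu_{B_N}$; this does not affect the conclusion since you only use it when $\lambda(A)=0$.
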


\begin{proof}
    Let $C_n$ and $\gamma_n$ be the sequences from Theorem~\ref{thm:main}, so that $\mu_Q=\sum_{n=1}^\infty \gamma_n \mu_{C_n}$.
    Function $H$ is clearly a distribution function of two random variables $X$ and $Y$ with support in $\II$. 
    The cumulative distribution of $X$ is given by
    \begin{equation*}
        F_X(x)=\frac{|\mu_Q|\big([0,x] \times \II\big)}{|\mu_Q|(\II^2)}.
    \end{equation*}
    Let $\mu_{F_X}$ be the positive measure induced by $F_X$, i.e., $\mu_{F_X}(A)=\frac{|\mu_Q|(A \times \II)}{|\mu_Q|(\II^2)}$ for all $A \in \BB(\II)$.
    Suppose $A \in \BB(\II)$ satisfies $\lambda(A)=0$. Let $P^+$ and $P^-$ be the supports of measures $\mu_Q^+$ and $\mu_Q^-$, respectively.
    Then
    \begin{align*}
        \mu_{F_X}(A) &=\frac{|\mu_Q|(A \times \II)}{|\mu_Q|(\II^2)}=\frac{\mu_Q\big((A \times \II) \cap P^+\big)-\mu_Q\big((A \times \II) \cap P^-\big)}{|\mu_Q|(\II^2)}\\
        &=\frac{1}{|\mu_Q|(\II^2)} \Big(\sum_{n=1}^\infty \gamma_n \mu_{C_n}\big((A \times \II) \cap P^+\big) - \sum_{n=1}^\infty \gamma_n \mu_{C_n}\big((A \times \II) \cap P^-\big)\Big).
    \end{align*}
    Since $\mu_{C_n}$ is a positive measure, $\mu_{C_n}\big((A \times \II) \cap P^+\big) \le \mu_{C_n}(A \times \II)=\lambda(A)=0$ and similarly $\mu_{C_n}\big((A \times \II) \cap P^-\big)=0$ for all $n \ge 1$.
    Hence, $\mu_{F_X}(A)=0$. This shows that measure $\mu_{F_X}$ is absolutely continuous with respect to Lebesgue measure. By Radon-Nikodym theorem there exists a $\BB(\II)$-measurable function $f_X$ such that $F_X(x)=\mu_{F_X}([0,x])=\int_{[0,x]} f_X d\lambda$. Therefore, $X$ is absolutely continuous random variable with density $f_X$. Similarly, $Y$ is also absolutely continuous. 
\end{proof}

\begin{remark}
Theorem~\ref{thm:ac} essentially states that the signed measures $\mu_1(A) = |\mu_Q|(A \times \II)$ and $\mu_2(A) =|\mu_Q|(\II \times A)$ are absolutely continuous with respect to the Lebesgue measure on $\II$.
The conclusion of Theorem~\ref{thm:ac} holds also if we replace measure $|\mu_Q|$ in formula~\eqref{eq:H} by either $\mu_Q^+$ or $\mu_Q^-$.
\end{remark}

\section{Example}\label{sec:example}

We conclude this paper with an example illustrating Theorem~\ref{thm:main} and Corollary~\ref{cor:measure}.
In \cite[Example~13]{DolKuzSto24} the authors construct an example of a quasi-copula $Q$, that induces a finite signed measure, but cannot be written as a linear combination of two copulas. This implies that its induced measure $\mu_Q$ cannot be written as a linear combination of two measures induced by copulas. On the other hand, by Corollary~\ref{cor:measure}, measure $\mu_Q$ can be expressed as an infinite linear combination of measures induced by copulas.
We now find such a representation of $\mu_Q$.

First, we briefly recall the definition of quasi-copula $Q$, for some additional details see \cite[Example~13]{DolKuzSto24}.
For a positive integer $n$ let $Q_n$ be a discrete quasi-copula defined on an equidistant mesh $\{0,\frac{1}{2n+1},\frac{2}{2n+1},\ldots,1\}^2 \subseteq \II^2$, which has mass distributed in a checkerboard pattern (of positive and negative values) within the central diamond-shaped area (with no mass outside this area), so that there are exactly $(n+1)^2$ squares with positive mass $\frac{1}{2n+1}$ and $n^2$ squares with negative mass $-\frac{1}{2n+1}$.
For example, the spread of mass of $Q_3$ is given by the matrix
$$
\left[\begin{smallmatrix}
 0 & 0 & 0 & \frac{1}{7} & 0 & 0 & 0 \\
 0 & 0 & \frac{1}{7} & -\frac{1}{7} & \frac{1}{7} & 0 & 0 \\
 0 & \frac{1}{7} & -\frac{1}{7} & \frac{1}{7} & -\frac{1}{7} & \frac{1}{7} & 0 \\
 \frac{1}{7} & -\frac{1}{7} & \frac{1}{7} & -\frac{1}{7} & \frac{1}{7} & -\frac{1}{7} & \frac{1}{7} \\
 0 & \frac{1}{7} & -\frac{1}{7} & \frac{1}{7} & -\frac{1}{7} & \frac{1}{7} & 0 \\
 0 & 0 & \frac{1}{7} & -\frac{1}{7} & \frac{1}{7} & 0 & 0 \\
 0 & 0 & 0 & \frac{1}{7} & 0 & 0 & 0 \\
\end{smallmatrix}\right]
$$
If we denote by $\widehat{Q}_n$ the bilinear extension of $Q_n$ to $\II^2$, then quasi-copula $Q$ is defined as an ordinal sum of quasi-copulas $\{\widehat{Q}_n\}_{n=1}^\infty$, with respect to the partition $\mathcal{J}=\{J_n\}_{n=1}^\infty$, where $J_n=[a_{n-1},a_n]$ and $a_n=\frac12+\frac14+\ldots+\frac{1}{2^n}=1-\frac{1}{2^n}$.
Since the length of $J_n$ is $\frac{1}{2^n}$, the summand $\widehat{Q}_n$ contributes a total mass of $\frac{1}{2^n}$ to the mass of $Q$.

Denote the product copula by $\Pi$. Note that for every positive integer $n$ the function $\widehat{C}_n=\frac{1}{2n}((2n+1)\Pi-\widehat{Q}_n)$ is clearly grounded and has uniform marginals. In fact, it is a copula since its mass is nowhere negative and its total mass is equal to $1$. We can express
\begin{equation}\label{eq:summand}
    \widehat{Q}_n=(2n+1)\Pi-2nC_n=\Pi+2n(\Pi-\widehat{C}_n).
\end{equation}
For a positive integer $n$ denote by $C_n$ the ordinal sum with respect to partition $\mathcal{J}$, where the $n$-th summand is $\widehat{C}_n$ and all other summands are $\Pi$. In addition, denote by $P$ the ordinal sum with respect to partition $\mathcal{J}$, where all the summands are $\Pi$.

We claim that
\begin{equation}\label{eq:ser_mu_Q}
\mu_Q =\mu_P+2(\mu_P-\mu_{C_1})+4(\mu_P-\mu_{C_2})+6(\mu_P-\mu_{C_3})+\ldots+2n(\mu_P-\mu_{C_n})+\ldots
\end{equation}
Indeed, the series on the right converges in the total variation norm because the support of measure $2n(\mu_P-\mu_{C_n})$ is contained in $J_n \times J_n$, so by equation~\eqref{eq:summand}
\begin{align*}
    \|2n(\mu_P-\mu_{C_n})\|_{TV} &=\tfrac{1}{2^n}\|\mu_{\widehat{Q}_n}-\mu_{\Pi}\|_{TV}= \tfrac{1}{2^n} \cdot 2\cdot (\mu_{\widehat{Q}_n}-\mu_{\Pi})^+(\II^2)\\
    &=\tfrac{1}{2^n} \cdot 2\cdot (n+1)^2\cdot \big(\tfrac{1}{2n+1}-\tfrac{1}{(2n+1)^2}\big)=\tfrac{4n(n+1)^2}{2^n(2n+1)^2},
\end{align*}
and the series $\sum_{n=1}^\infty \tfrac{4n(n+1)^2}{2^n(2n+1)^2}$ is convergent with sum $\approx 2.842$.
In addition, on each $J_n \times J_n$ the sum of the series in \eqref{eq:ser_mu_Q} coincides with $\mu_Q$ by equation~\eqref{eq:summand}, because only the two terms $\mu_P$ and $2n(\mu_P-\mu_{C_n})$ are nonzero there. This implies that the sum of the series in \eqref{eq:ser_mu_Q} is indeed equal to $\mu_Q$.

Using the trick from the proof of Lemma~\ref{lem:series}, we obtain from equation~\eqref{eq:ser_mu_Q} the converging sum expression for $\mu_Q$, namely
\begin{align*}
\mu_Q =\mu_P &+\underbrace{\mu_P-\mu_{C_1}+\mu_P-\mu_{C_1}}_{4 \text{ terms}}\\
&+\underbrace{\tfrac{1}{2}\mu_P-\tfrac{1}{2}\mu_{C_2}+\tfrac{1}{2}\mu_P-\tfrac{1}{2}\mu_{C_2}+\ldots+\tfrac{1}{2}\mu_P-\tfrac{1}{2}\mu_{C_2}}_{16 \text{ terms}}\\
&+\underbrace{\tfrac{1}{3}\mu_P-\tfrac{1}{3}\mu_{C_3}+\tfrac{1}{3}\mu_P-\tfrac{1}{3}\mu_{C_3}+\ldots+\tfrac{1}{3}\mu_P-\tfrac{1}{3}\mu_{C_3}}_{36 \text{ terms}}\\
&+\ldots\\[1em]
&+\underbrace{\tfrac{1}{n}\mu_P-\tfrac{1}{n}\mu_{C_n}+\tfrac{1}{n}\mu_P-\tfrac{1}{n}\mu_{C_n}+\ldots+\tfrac{1}{n}\mu_P-\tfrac{1}{n}\mu_{C_n}}_{4n^2 \text{ terms}}\\
&+\ldots
\end{align*}
Of course, this representation is not unique.

\section*{Acknowledgments}

The author would like to thank Damjana Kokol Bukovšek for useful discussions and comments that helped to improve the presentation of this paper.
The author acknowledges financial support from the ARIS (Slovenian Research and Innovation Agency), research core funding No. P1-0222.

\bibliographystyle{amsplain}
\bibliography{measure_inducing_quasi-copulas}

\end{document}